\documentclass[11pt,a4paper]{article}
\usepackage[vmargin=2.5cm,hmargin=2.2cm]{geometry} 
\usepackage{mathrsfs,amssymb,amsmath,amsfonts}
\usepackage{amsthm}
\usepackage{enumerate}
\usepackage{color}
\usepackage{wrapfig} 
\usepackage{epstopdf}
\usepackage{mathtools}
\usepackage{amsfonts}
\usepackage{graphicx}
\usepackage{amssymb}
\usepackage{epsfig}
\usepackage{algorithm}
\usepackage{xcolor}
\usepackage{float}
\usepackage{epstopdf}
\usepackage{adjustbox}
\usepackage{subcaption}
\usepackage[numbers]{natbib}
\usepackage{booktabs}
\definecolor{lightblue}{RGB}{0,102,204}
\usepackage[colorlinks=true, linkcolor=lightblue, citecolor=red]{hyperref}
\usepackage{cleveref}
\creflabelformat{equation}{(#2#1#3)}
\crefrangeformat{equation}{(#3#1#4)--(#5#2#6)}
\crefname{equation}{}{}
\usepackage[page]{appendix}

\usepackage[labelfont=bf]{caption} 
\newtheorem{assumption}{Assumption}[section]
\newtheorem{theorem}{Theorem}[section]
\newtheorem{lemma}{Lemma}[section]
\newtheorem{definition}{Definition}[section]
\newtheorem{proposition}{Proposition}[section]

\theoremstyle{definition}

\begin{document}

\title{Inverse Optimal Control for Linear Quadratic Problem with Poisson Jumps: Model-Free Inverse Reinforcement Learning Approaches}
\author{Wen Du,\thanks{School of Statistics and Mathematics, Shandong University of Finance and Economics, Jinan 250014, China. Email:  \url{wwen_du@163.com}.} \and
Na Li,\thanks{Corresponding author. School of Mathematical Sciences, Dalian University of Technology, Dalian 116024, China. Email:  \url{lina2025@dlut.edu.cn}. The author acknowledges financial support from the NSFC (No. 12571475, No. 12171279).} \and
Xun Li,\thanks{Department of Applied Mathematics, The Hong Kong Polytechnic University, Hong Kong SAR, China. Email:  \url{li.xun@polyu.edu.hk}. The author acknowledges financial support from the Research Grants Council of Hong Kong under grant (No. 15225124) and PolyU 4-ZZVB.} \and
Zuo Quan Xu\thanks{Department of Applied Mathematics, The Hong Kong Polytechnic University, Kowloon, Hong Kong SAR, China. Email:  \url{maxu@polyu.edu.hk}. 
The author acknowledges financial support from the NSFC (No. 12571517), Hong Kong RGC (GRF 15203423 and 15204622), The PolyU-SDU Joint Research Center on Financial Mathematics, The CAS AMSS-PolyU Joint Laboratory of Applied Mathematics, The Research Centre for Quantitative Finance (1-CE03), The Hong Kong Polytechnic University.}}

\maketitle

\begin{abstract}
This paper addresses the inverse optimal control (IOC) problem for stochastic linear systems subject to both Brownian motion and Poisson jumps, using an inverse reinforcement learning (IRL) framework. Given a target feedback gain from an expert, the objective is to identify an equivalent cost functional --- specifically, the set of all cost weights --- that yields this same gain. To solve this problem when system dynamics are unknown, we propose two model-free, off-policy IRL algorithms that operate entirely from data, circumventing the need to solve the generalized algebraic Riccati equation or compute the cost weights analytically. The first is an inverse Q-learning algorithm that constructs data-driven equations from expert demonstrations to compute the Q-function matrix, with equivalent cost weights updated algebraically and without requiring additional trajectory data. The second is a model-free off-policy inverse policy iteration algorithm that leverages data collected under an initial stabilizing policy, offering a complementary approach suited to different data availability scenarios. Crucially, by decoupling the data-collection behavior policies from the policies being iteratively updated, both algorithms can learn equivalent cost weights from sufficiently excited trajectories without identifying the system dynamics or jump intensity. Numerical simulations validate the effectiveness of the proposed methods.\\ \par\
\textbf{Keywords: } {Inverse optimal control, inverse reinforcement learning, model-free, off-policy, Q-learning, Poisson jumps.}
\end{abstract}

\newpage

\section{Introduction} \label{section1}
~~~~Many practical problems are influenced by both continuous fluctuations and abrupt variations, such as policy shocks, cyberattacks, and credit defaults. These discontinuous events occur at random times, and cannot be adequately described by diffusion models with continuous sample paths. Therefore, stochastic systems driven by Brownian motion and Poisson jumps provide an important framework for modeling both continuous disturbances and sudden shocks. Given a prescribed cost functional, stochastic optimal control (SOC) seeks to determine the optimal control law. {\O}ksendal and Sulem \cite{oksendal2010maximum} studied SOC problems for forward-backward stochastic differential equations with jumps and established sufficient and necessary maximum principles. Song {\em et al.} \cite{song2020maximum} derived a rigorous stochastic maximum principle for stochastic systems with jumps by introducing a new spike variation technique that handles the estimation difficulties caused by jump terms. In the linear quadratic (LQ) case, Li {\em et al.} \cite{Li-Wu} addressed the indefinite stochastic LQ problem with Poisson jumps through a relaxed compensator, and obtained well-posedness and solvability results for the associated Hamiltonian system and Riccati equation with jumps. Wu {\em et al.} \cite{Wu-Tang} further established the equivalence among open-loop solvability, closed-loop solvability, and the existence of a stabilizing solution to the generalized algebraic Riccati equation in an infinite-horizon setting. However, in many practical problems, the cost functional is unknown, while a target feedback gain or expert trajectories may be available. This leads to inverse optimal control (IOC), which aims to find a cost functional under which the given policy or behavior can be interpreted as optimal. Traditional IOC methods usually rely on known system dynamics and learn cost weights by applying optimality tools such as Riccati equations, linear matrix inequalities, and Karush-Kuhn-Tucker conditions \cite{jean2018inverse,priess2014solutions, ab2020inverse}. Do \cite{do2019inverse} extended IOC to jump-diffusion systems and derived inverse optimal stabilizers without explicitly solving Hamilton-Jacobi-Bellman equations. Recently, Ren {\em et al.} \cite{ren2026inverse} studied inverse optimal incremental control for nonlinear jump-diffusion systems by learning meaningful cost functionals that ensure optimality and incremental stability. Most existing approaches still require prior knowledge of the system dynamics, which becomes restrictive when the system is unknown or difficult to model accurately.

Reinforcement learning (RL) \cite{Sutton-2018} has provided a series of data-driven frameworks for SOC that reduce dependence on exact system dynamics. In particular, RL methods for continuous-time SOC have attracted increasing attention. Jia and Zhou \cite{jia2022policy} developed policy gradient and actor-critic learning algorithms based on martingale characterizations, and subsequently proposed a q-learning framework in which the conventional Q-function is replaced by a first-order q-function \cite{jia2023q}. Li {\em et al.} \cite{Li-Li} introduced an online partially model-free RL approach to compute the optimal control policy without solving the associated stochastic algebraic Riccati equation. Zhao {\em et al.} \cite{Zhao2024} proposed a model-free RL paradigm that simultaneously determines the optimal control gain and optimal dynamic compensator with unknown dynamics and unmeasurable disturbances. For jump-diffusion systems, Guo \cite{guo2023reinforcement} presented a greedy least-squares algorithm that estimates unknown system dynamics online while learning optimal feedback policies, and this algorithm guarantees Lipschitz stability and sublinear regret. Gao {\em et al.} \cite{gao2024reinforcement} designed a q-learning framework for jump-diffusion models and studied the effect of jumps on continuous-time RL algorithms with applications to financial problems. Other recent related works include Zhang and Li \cite{zhang-2024}, Zhang and Jia \cite{Zhang2025}, among others. 

Inverse RL (IRL), introduced by Ng and Russell \cite{Ng-2000}, provides a data-driven approach to address IOC problems. Its objective is to find an equivalent cost functional from observed trajectories that yields the target control gain. IRL algorithms have attracted considerable attention for deterministic systems. Self {\em et al.} \cite{self2022model} developed an online IRL algorithm that simultaneously estimates system parameters and cost weights from input-output data. This algorithm relaxes the requirement for known parameters and enables online adaptation from measured data, while remaining constrained by a predefined system structure. Xue {\em et al.} \cite{Xue-2021inverse} proposed a model-free inverse Q-learning algorithm for discrete-time systems. Their method learns equivalent cost weights using only input-state trajectories, without requiring system dynamics information. Lian {\em et al.} \cite{lian2024inverse} designed inverse value iteration and model-free inverse Q-learning algorithms for continuous-time linear systems. The inverse Q-learning approach eliminates the need for an initial stabilizing control policy. More recently, IRL studies have been extended to stochastic systems. Sun and Jia \cite{Sun-2025} presented a model-free off-policy IRL method for continuous-time stochastic LQ systems, in which equivalent cost weights are learned without prior knowing the system dynamics. However, IRL for jump-diffusion systems remains largely unexplored. This motivates the development of data-driven model-free IRL methods for IOC problems with Poisson jumps. 

In this paper, we study an inverse stochastic LQ (ISLQ) problem and develop a two-agent IRL framework consisting of a target expert agent and a learner agent. Both agents share the same linear system dynamics. The learner seeks an equivalent cost functional that yields the same feedback gain as the expert, without prior knowledge of the system parameters or jump intensity. To solve the ISLQ problem in a data-driven manner, we propose two model-free off-policy IRL algorithms based on different data sources. The first is an off-policy inverse Q-learning algorithm, which constructs data-driven Q-function equations directly from expert demonstrations. The second is a model-free off-policy inverse policy iteration algorithm, which uses trajectories generated by the learner under an initial stabilizing behavior policy. In both algorithms, the behavior policy used for data collection is decoupled from the policy updated during iteration. The main contributions of this study are listed as follows.
\begin{enumerate}[1)]
\item In contrast to the existing IOC method for stochastic systems with Poisson jumps \cite{do2019inverse}, this paper proposes two completely model-free IRL algorithms. The proposed methods learn equivalent cost weights using either the expert's demonstrations or the learner's trajectories collected over local time intervals, without identifying the system dynamics or jump intensity. The convergence and stability of the proposed algorithms are rigorously proved.
\item The inverse Q-learning algorithm is devoted to solving the IOC problem using the expert's demonstrated trajectories. The algorithm constructs data-driven equations in terms of the Q-function and uses the expert trajectories to calculate the Q-function matrix, based on which the equivalent cost weight is updated algebraically without requiring additional trajectory data. A probing noise is introduced into the expert input to guarantee the persistent excitation rank condition and enrich the data.
\item The model-free off-policy inverse policy iteration algorithm is proposed as a counterpart to the inverse Q-learning algorithm. In this algorithm, the learner collects data under an initial stabilizing control policy and uses the collected trajectories to iteratively update the equivalent cost weights. This provides an alternative data-driven framework when the learner's trajectories are available. Sufficient excitation is added to the behavior policy to ensure the solvability of the learning equations at each iteration.
\end{enumerate}

The remainder of this paper is organized as follows. Section \hyperref[section2]{2} formulates an ISLQ problem for jump-diffusion systems and introduces the definition of equivalent cost weights. Section \hyperref[section3]{3} develops an off-policy inverse Q-learning algorithm using the expert's demonstrated trajectories, where the Q-function formulation is employed to obtain data-driven equations without using system parameters. Section \hyperref[section4]{4} proposes a model-free off-policy inverse policy iteration algorithm based on the learner's trajectories collected under an initial stabilizing behavior policy. The stability and convergence of the proposed algorithms are established in the corresponding sections. Section \hyperref[section5]{5} provides a numerical example to illustrate their effectiveness. Finally, Section \hyperref[section6]{6} concludes the paper.

\textit{Notation}: Let $(\Omega,\mathcal{F},\mathbb{P},\mathbb{F})$ be a complete filtered probability space. The filtration $\mathbb{F}=\left\{{\mathcal{F}_t}\right\}_{t\geq 0}$ is generated by two mutually independent stochastic processes and augmented the collection of all P-null sets. One is a standard one-dimensional Brownian motion $\{W(t)\}_{t \geq 0}$, 
and the other is a Poisson random measure $\{N(\cdot,\cdot)\}$ defined on $\mathbb{R}_+\times \mathcal{E}$, where $\mathcal{E}=\mathbb{R}\setminus \{0\}$ is a nonempty Borel subset of some Euclidean space. The compensator of $N(\cdot,\cdot)$ is $\overline{N}(dt,de)=\lambda (de)dt$ which make $\left\{\widetilde{N}((0,t]\times S)=(N-\overline{N})((0,t]\times S); ~ 0 \leq t < \infty\right\}$ a martingale for any $S\in\mathcal{B}(\mathcal{E})$ with $\lambda(\mathcal{E})<\infty$. Here, $\lambda$ is a given $\sigma$-finite measure on the measurable space $(\mathscr{E},\mathscr{B}(\mathscr{E}))$ such that $\int _\mathscr{E}(1\wedge e^2 )\lambda(de)<\infty$. 
Denote by $\mathbb R^n$ the $n$-dimensional Euclidean space and by $\mathbb{R}^{n\times m}$ the set of all $n\times m$ real matrices. Let $\langle \cdot, \cdot \rangle $ be the inner product on $\mathbb{R}^n$. For a given vector or matrix $A$, $A^{\top}$ denotes its transpose. The symbol ${\bf{0}}$ denotes a zero vector or matrix with appropriate dimension and $\varnothing$ denotes the empty set. Furthermore, $\mathbb{E}^{\mathcal{F}_t} =\mathbb{E}\left[ \cdot|\mathcal{F}_t \right]$ stands for the conditional expectation operator. We use $\mathbb{S}^n$, $\mathbb{S}^n_+$ and $\mathbb{S}^n_{++}$ to denote the set of all symmetric matrices, non-negative definite matrices and positive definite matrices in $\mathbb{R}^{n\times n}$. As usual, if a matrix $A \in \mathbb{S}_{+}^n$ (respectively, $\mathbb{S}^n_{++}$), we write $A \geq \mathbf{0} $ (respectively, $> \mathbf{0}$). For matrices $A,B\in \mathbb{S}^{n}$, we write $A\geq B$ (respectively, $A > B$) if $A - B \geq \mathbf{0} $ (respectively, $> \mathbf{0}$). For a Euclidean space \(\mathbb H\) with norm $\left\|\cdot\right\|_{\mathbb H}$, we define the Hilbert space $L^2_{\mathbb{F}}(\mathbb H)$, which is the space of $\mathbb H$-valued and $\mathbb{F}$-progressively measurable processes $\left\{f(t,w), (t,w)\in \left[0,\infty\right) \times \Omega \right\}$ such that $\mathbb{E}\int^{\infty}_0 \left\|f(t,w)\right\|_{\mathbb H}^2dt<\infty$. Moreover, \(L^{\lambda,2}(\mathcal E;\mathbb H)\) denotes the space of all $\mathbb H$-valued measurable functions $\{r(e),e \in \mathcal{E}\}$ defined on the measurable space $\left(\mathcal{E},\mathcal{B}(\mathcal{E});\lambda\right)$ satisfying $\int _{\mathcal{E}} \left\|r(e)\right\|_{\mathbb H} ^2\lambda (de)<\infty$. For any matrix $P\in \mathbb{S}^n$, we define the following vectorization operators:
$$
vec(P)=[p_{11},p_{21}, \dots, p_{n1},p_{12}, p_{22},\dots, p_{n-1,n}, p_{nn}]^{\top},
$$
$$
vec^{+}(P)=[p_{11},2p_{12}, \dots ,2p_{1n},p_{22},2p_{23},\dots ,2p_{n-1,n},p_{nn}]^{\top},
$$
where $vec(P) \in \mathbb{R}^{n^2}$ and $vec^{+}(P) \in \mathbb{R}^{{\frac{1}{2}}n(n+1)}$. Let $\otimes$ be the Kronecker product. If $A$, $B$ and $C$ have appropriate dimensions, then $vec(ABC)=\left(C^{\top}\otimes A\right)vec(B)$.

\section{Preliminaries and Problem Formulation} \label{section2}
~~~~In this section, we establish a two-agent IRL framework consisting of a target expert agent and a learner agent sharing the same system dynamics. The learner seeks to find an equivalent cost functional whose optimal feedback gain coincides with the target gain. Based on this framework, we introduce the notion of equivalent cost weights, formulate the ISLQ problem, and characterize its solution set.

We consider the following stochastic linear dynamics driven by Brownian motion and Poisson jumps:
\begin{equation}\label{expert-system}
\left\{
\begin{aligned}
dX(s)&=[AX(s)+Bu(s)]ds+[CX(s)+Du(s)]dW(s)\\
&~~~+\int_{\mathcal{E}}[E(e)X(s-)+F(e)u(s)]\widetilde{N}(ds,de),~~s\geq t, \\
X(t)&=x,
\end{aligned}
\right.
\end{equation}
where the coefficients $A,C\in \mathbb R^{n\times
n}$, $B,D \in \mathbb R^{n\times
m}$ are constant matrices, $E(\cdot) \in L^{\lambda,2}(\mathcal{E};\mathbb R^{n\times n})$ and $F(\cdot) \in
L^{\lambda,2}(\mathcal{E};\mathbb R^{n\times m})$ are given deterministic matrix-valued functions. In this system, the process $X(\cdot)\in L^2_{\mathbb{F}}(\mathbb R^n)$ is the state process, $u(\cdot)\in L^2_{\mathbb{F}}(\mathbb R^m) $ is the control process, and $x \in \mathbb{R}^n$ is the initial state at the initial time $t$.

For any given initial state $x \in \mathbb{R}^n$ and initial time $t\geq0$, the expert agent is associated with the following quadratic target cost functional
\begin{equation} \label{target cost functional}
\begin{aligned}
\mathcal{J}_{\mathcal{T}}(t,x;u(\cdot)) :=\mathbb{E}^{\mathcal{F}_t}
\int_t^{\infty}\left[\langle
N_\mathcal{T}X(s),X(s)\rangle+\langle
R_\mathcal{T}u(s),u(s)\rangle\right]ds,
\end{aligned}
\end{equation}
where $N_\mathcal{T}\in \mathbb{S}_{++}^{m}$ and $R_\mathcal{T}\in \mathbb{S}_{++}^{m}$ are the target state-penalty and input-penalty weights, respectively, with appropriate dimensions. 

To ensure that the above infinite-horizon cost functional is well-defined, we introduce the concept of $L^{2}$-stabilizability.

\begin{definition} \label{definition2.1}
{\rm 
System \eqref{expert-system} is said to be $L^{2}$-stabilizable if there exists a constant matrix $K \in \mathbb R^{m \times n}$ such that the process $X$ driven by 
\begin{equation} \label{feedback SDE}
\left\{ 
\begin{aligned}
dX(s)&=\left(A+BK\right)X(s)ds+\left(C+DK\right)X(s)dW(s)\\
&~~~+\int_{\mathcal{E}}\left(E(e)+F(e)K\right)X(s)\widetilde{N}(ds,de),~~s\geq t, \\
X(t)&=x,
\end{aligned}
\right.
\end{equation}
satisfies $\lim_{s \to \infty}$ $\mathbb{E} [{X(s)}^{\top}X(s)]=0$. In this case, the closed-loop system \eqref{feedback SDE} is called $L^{2}$-stable. The feedback control $u(\cdot)=KX(\cdot)$ is called a stabilizing control for system \eqref{expert-system} with the matrix $K$ being a stabilizer.}
\end{definition}

\begin{assumption} \label{assumption1}
\rm{System \eqref{expert-system} is $L^{2}$-stabilizable.}
\end{assumption}

\begin{assumption} \label{assumption3}
\rm{All cost weight matrices associated with the quadratic cost functionals considered throughout this paper are positive definite.}
\end{assumption}

The following lemma provides an equivalent condition for the existence of stabilizers for system \eqref{expert-system}.

\begin{lemma}\cite[Theorem~4.1]{Wu-Tang} \label{lemma 2.3}
{\rm A matrix $K\in \mathbb{R}^{m\times n}$ is a stabilizer of system \eqref{expert-system} if and only if there exists a matrix $P\in \mathbb{S}_{++}^n$ such that }
\[
\begin{aligned}
(A+BK)^{\top}P&+P(A+BK)+(C+DK)^{\top}P(C+DK)\\
&+\int_{\mathcal{E}}(E(e)+F(e)K)^{\top}P(E(e)+F(e)K)\lambda(de) < \mathbf{0}.
\end{aligned}
\]
{\rm In this case, the following Lyapunov equation 
\begin{align*}
(A+BK)^{\top}P+P(A+BK)&+(C+DK)^{\top}P(C+DK)\\
&+\int_{\mathcal{E}}(E(e)+F(e)K)^{\top}P(E(e)+F(e)K)\lambda(de)+\Xi=\mathbf{0} 
\end{align*}
admits a unique solution $P \in \mathbb{S}^n$ (respectively, $\mathbb{S}_{+}^n$ and $\mathbb{S}^n_{++}$) for any $\Xi \in \mathbb{S}^{n}$ (respectively, $\mathbb{S}_{+}^n$ and $\mathbb{S}^n_{++}$).
}
\end{lemma}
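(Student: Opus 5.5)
The plan is to reduce everything to a deterministic linear matrix ODE for the second moment of the closed-loop state, and then use standard spectral facts about the associated linear operator on $\mathbb{S}^n$. Fix $K$ and write $A_K=A+BK$, $C_K=C+DK$, $E_K(e)=E(e)+F(e)K$. Define the linear operator $\mathcal{L}_K:\mathbb{S}^n\to\mathbb{S}^n$ by
\[
\mathcal{L}_K(P)=A_K^{\top}P+PA_K+C_K^{\top}PC_K+\int_{\mathcal{E}}E_K(e)^{\top}PE_K(e)\lambda(de),
\]
and its adjoint with respect to the trace inner product,
\[
\mathcal{L}_K^{*}(Y)=A_KY+YA_K^{\top}+C_KYC_K^{\top}+\int_{\mathcal{E}}E_K(e)YE_K(e)^{\top}\lambda(de).
\]
Apply It\^o's formula for jump diffusions to $X(s)X(s)^{\top}$ along \eqref{feedback SDE}. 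The stochastic integrals against $dW$ and $\widetilde N$ are martingales, by square integrability and $E_K\in L^{\lambda,2}$. The jump contribution produces the compensator term $\int E_K Y E_K^{\top}\lambda(de)$. This gives that $Y(s)=\mathbb{E}[X(s)X(s)^{\top}]$ solves $\dot Y=\mathcal{L}_K^{*}(Y)$. Consequently, $L^2$-stability for every initial state $x$ is equivalent to $e^{\mathcal{L}_K^{*}s}\to 0$ on the cone $\mathbb{S}^n_+$. Since $\mathbb{S}^n_+$ spans $\mathbb{S}^n$, this is equivalent to $\mathcal{L}_K^{*}$, and hence $\mathcal{L}_K$, being Hurwitz.

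For sufficiency, suppose $P>0$ satisfies $\mathcal{L}_K(P)<0$. Then $\mathcal{L}_K(P)\le -\varepsilon P$ for some $\varepsilon>0$. Applying It\^o to $X^{\top}PX$ gives $\frac{d}{ds}\mathbb{E}[X^{\top}PX]=\mathbb{E}[X^{\top}\mathcal{L}_K(P)X]\le-\varepsilon\,\mathbb{E}[X^{\top}PX]$. Gronwall's inequality then yields exponential decay, so $\mathbb{E}|X(s)|^2\to0$.

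For necessity, suppose $K$ is stabilizing, so $\mathcal{L}_K$ is Hurwitz and $\int_0^\infty\|e^{\mathcal{L}_Ks}\|ds<\infty$. For any $\Xi\in\mathbb{S}^n$, define
\[
P=\int_0^\infty e^{\mathcal{L}_Ks}(\Xi)\,ds.
\]
This gives $\mathcal{L}_K(P)=-\Xi$. Uniqueness follows because $\mathcal{L}_K$ is invertible, as $0$ is not an eigenvalue. For the sign claims, the key structural fact is that $e^{\mathcal{L}_Ks}$ preserves $\mathbb{S}^n_+$. This is exactly the duality identity
\[
x^{\top}\big(e^{\mathcal{L}_Ks}\Xi\big)x=\mathbb{E}\big[X(s)^{\top}\Xi X(s)\big]\quad\text{with }X(0)=x,
\]
which is obtained from It\^o's formula applied to $X^{\top}\Phi(\tau-s)X$ for the solution $\Phi$ of the backward equation. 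Hence $\Xi\ge0$ gives $P\ge0$. If $\Xi>0$, then $x^{\top}Px\ge c\int_0^\infty\mathbb{E}|X(s)|^2ds>0$ for $x\neq0$, since $\mathbb{E}|X(s)|^2$ is continuous and equals $|x|^2$ at $s=0$. Taking $\Xi=I$ then produces the $P>0$ with strict Lyapunov inequality.

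The main obstacle is rigor in the two It\^o computations with the Poisson random measure. One must justify that the compensated integrals are true martingales rather than local martingales, via the $L^{\lambda,2}$ assumptions and a localization or Burkholder argument. One must also correctly identify the jump term as $\int E_K^{\top}PE_K\,\lambda(de)$, coming from $\mathbb{E}\int[(X+E_KX)^{\top}P(X+E_KX)-X^{\top}PX-2X^{\top}PE_KX]\lambda(de)ds$. A second subtle point is the equivalence "$L^2$-stable for all $x$ $\Leftrightarrow$ $\mathcal{L}_K$ Hurwitz". The plan here is to use linearity: convergence $Y(s)\to0$ for rank-one initial data $xx^{\top}$ extends to all of $\mathbb{S}^n$ by spanning. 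For a finite-dimensional linear ODE, convergence to zero of every solution is equivalent to Hurwitz.
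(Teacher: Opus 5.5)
Your argument is correct. The paper gives no proof of this lemma to compare against; it imports it verbatim from \cite[Theorem~4.1]{Wu-Tang}. What you supply is a self-contained version of the standard Lyapunov-operator argument. You derive the second-moment equation $\dot Y=\mathcal{L}_K^{*}(Y)$, identify stabilizability with $\mathcal{L}_K$ being Hurwitz on $\mathbb{S}^n$, and represent the solution as $P=\int_0^\infty e^{\mathcal{L}_K s}(\Xi)\,ds$. The sign claims then come from $x^{\top}Px=\int_0^\infty\mathbb{E}[X(s)^{\top}\Xi X(s)]\,ds$.

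The citation is economical, but your route makes explicit two things the paper relies on silently. First, it works directly with the paper's Definition~\ref{definition2.1}, namely $\mathbb{E}|X(s)|^2\to0$. Part of the infinite-horizon LQ literature defines $L^2$-stability instead by $\mathbb{E}\int_0^\infty|X(s)|^2ds<\infty$. The same Hurwitz argument shows the two notions coincide here, so the cited result is applicable as stated. Second, the integral representation is exactly the comparison principle the paper uses later. For example, $P^{(i)}\le P^{(i+1)}$ and $P^{(1)}<P$ in the proof of Theorem~\ref{theorem4.2} come from the sign part of the lemma with $\Xi\ge\mathbf{0}$, respectively $\Xi>\mathbf{0}$.

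Two small points on execution. The duality identity needs no backward equation: it follows from trace adjointness and the moment equation you already have,
$x^{\top}e^{\mathcal{L}_Ks}(\Xi)x=\mathrm{tr}\big(e^{\mathcal{L}_Ks}(\Xi)\,xx^{\top}\big)=\mathrm{tr}\big(\Xi\,e^{\mathcal{L}_K^{*}s}(xx^{\top})\big)=\mathbb{E}[X(s)^{\top}\Xi X(s)]$.

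On the martingale issue, close it by localization rather than a Burkholder bound. The jump integrand contains $2X(s-)^{\top}PE_K(e)X(s-)$, which is only linear in $E_K$. With $E_K\in L^{\lambda,2}$ alone you have, in general, neither $\int_{\mathcal{E}}\|E_K(e)\|\,\lambda(de)<\infty$ nor finite fourth moments of $X$, so neither the $L^1$ nor the $L^2$ martingale criterion applies directly. Instead, stop at a localizing sequence $\tau_k$ and let $k\to\infty$ by dominated convergence, using the standard estimate $\mathbb{E}\sup_{s\le T}|X(s)|^2<\infty$. This gives the moment equation rigorously.
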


Under Assumption \hyperref[assumption1]{2.1}, we define the set of admissible controls as 
\begin{equation*}
\mathcal{U}_{ad} :=\big\{u(\cdot)\in L^2_{\mathbb{F}}(\mathbb{R}^m)\;\big|\; u(\cdot)~\text{is a stabilizing control}\big\}.
\end{equation*}

The expert seeks an admissible control $u_\mathcal{T}(\cdot) \in \mathcal{U}_{ad}$ such that
\begin{equation} \label{eq3}
\begin{aligned}
\mathcal{J}_{\mathcal{T}}(t,x;u_\mathcal{T}(\cdot))= \inf_{u(\cdot)\in\mathcal{U}_{ad}}\mathcal{J}_{\mathcal{T}}(t,x;u(\cdot))\triangleq V_\mathcal{T}(t,x),
\end{aligned}
\end{equation}
where $V_\mathcal{T}(t,x)$ is called the target value function and $u_\mathcal{T}(\cdot)$ is called the target optimal control. Correspondingly, the solution $X_\mathcal{T}(\cdot)$ to \eqref{expert-system} under $u_\mathcal{T}(\cdot)$ is called the target optimal trajectory. Based on \cite[Theorem~5.4]{Wu-Tang}, $u_\mathcal{T}$ has the following feedback form 
\begin{equation} \label{eq5}
\begin{aligned}
u_\mathcal{T}(s)
=K_\mathcal{T}X_\mathcal{T}(s)
\end{aligned}
\end{equation}
with the target control gain
\begin{align} 
K_\mathcal{T}=&-\left(R_\mathcal{T}+D^{\top}P_\mathcal{T}D+\int_{\mathcal{E}}{F(e)}^{\top}P_\mathcal{T}F(e)\lambda(de)\right)^{-1}\left(B^{\top}P_\mathcal{T}+D^{\top}P_\mathcal{T}C+\int_{\mathcal{E}}{F(e)}^{\top}P_\mathcal{T}E(e)\lambda(de)\right), \label{expert gain}
\end{align}
and $P_\mathcal{T} \in \mathbb{S}_{++}^{n}$ is the solution of the following stochastic Riccati equation with Poisson jumps (SREP)
\begin{equation} \label{eq4}
\begin{aligned}
&P_\mathcal{T}A+A^{\top}P_\mathcal{T}+C^{\top}P_\mathcal{T}C+\int_{\mathcal{E}}E(e)^{\top}P_\mathcal{T}E(e)\lambda(de)+N_\mathcal{T}-\left(P_\mathcal{T}B+C^{\top}P_\mathcal{T}D+\int_{\mathcal{E}}E(e)^{\top}P_\mathcal{T}F(e)\lambda(de)\right)\\
&\left(R_\mathcal{T}+D^{\top}P_\mathcal{T}D+\int_{\mathcal{E}}F(e)^{\top}P_\mathcal{T}F(e)\lambda(de)\right)^{-1}\left(B^{\top}P_\mathcal{T}+D^{\top}P_\mathcal{T}C+\int_{\mathcal{E}}F(e)^{\top}P_\mathcal{T}E(e)\lambda(de)\right)=\mathbf{0}.
\end{aligned}
\end{equation} 

For a given $R \in\mathbb{S}_{++}^m$ , which may differ from $R_\mathcal{T}$ given in \eqref{target cost functional}, the learner aims to find the following equivalent cost functional.

\begin{definition} {\rm (Equivalent cost functional)}\label{definition2.2} 
{\rm A cost functional 
\begin{align}
\mathcal{J}(t,x;u(\cdot)) :=\mathbb{E}^{\mathcal{F}_t}
\int_t^{\infty}\left[\langle
NX(s),X(s)\rangle+\langle
Ru(s),u(s)\rangle\right]ds \label{learner function}
\end{align} 
with $ N \in \mathbb{S}^n_{++}$ and $R \in \mathbb{S}^m_{++}$ is called an equivalent cost functional to \eqref{target cost functional} subject to system \eqref{expert-system} if its optimal feedback gain coincides with the target gain $K_\mathcal{T}$. In this case, $\left(N,R\right)$ is called an equivalent weight pair to $\left(N_\mathcal{T},R_\mathcal{T}\right)$. If $R \in \mathbb{S}^m_{++}$ is given, then $N$ is called an equivalent weight to $N_\mathcal{T}$.}
\end{definition}

We now make the following assumption that describes the information available to the learner and formulate the ISLQ problem.

\begin{assumption} \label{assumption2}
{\rm 1) The expert's target gain $K_\mathcal{T}$ is available.
2) The system parameters in \eqref{expert-system}, the jump intensity, and target cost weights $N_\mathcal{T}$ and $R_\mathcal{T}$ in \eqref{target cost functional} are unknown.
}
\end{assumption}

\noindent\textbf{Problem (ISLQ):}
Under Assumptions \hyperref[assumption3]{2.2} and \hyperref[assumption2]{2.3}, for a given $R \in\mathbb{S}_{++}^{m}$, the learner's objective is to find an equivalent weight $N\in \mathbb{S}_{++}^{n}$ corresponding to $N_\mathcal{T}$.

The following theorem establishes a sufficient condition for the solution to Problem (ISLQ).

\begin{theorem}\label{theorem2.1}
{\rm Given $R \in \mathbb{S}_{++}^{m}$. Suppose that there exist $P$, $N \in \mathbb{S}_{++}^{n}$ satisfy the following SREP
\begin{align}
&PA+A^{\top}P+C^{\top}PC+\int_{\mathcal{E}}E(e)^{\top}PE(e)\lambda(de)+N-\left(PB+C^{\top}PD+\int_{\mathcal{E}}E(e)^{\top}PF(e)\lambda(de)\right) \nonumber\\
&\left(R+D^{\top}PD+\int_{\mathcal{E}}F(e)^{\top}PF(e)\lambda(de)\right)^{-1}\left(B^{\top}P+D^{\top}PC+\int_{\mathcal{E}}F(e)^{\top}PE(e)\lambda(de)\right)=\mathbf{0} \label{eq8}
\end{align} 
and Lyapunov equation}
\begin{align}
P(A+BK_\mathcal{T})&+(A+BK_\mathcal{T})^{\top}P+(C+DK_\mathcal{T})^{\top}P(C+DK_\mathcal{T})\nonumber \\
&+\int_{\mathcal{E}}(E(e)+F(e)K_\mathcal{T})^{\top}P(E(e)+F(e)K_\mathcal{T})\lambda(de)+N+K_\mathcal{T}^{\top}RK_\mathcal{T}=\mathbf{0}, \label{eq10}
\end{align}
{\rm then $N$ is an equivalent weight to $N_\mathcal{T}$.}
\end{theorem}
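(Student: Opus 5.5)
The plan is to show that the optimal gain of the learner's problem with weights $(N,R)$ equals $K_\mathcal{T}$. Introduce the shorthand
\[
\hat R(P)=R+D^{\top}PD+\int_{\mathcal{E}}F(e)^{\top}PF(e)\lambda(de),\qquad
\hat S(P)=B^{\top}P+D^{\top}PC+\int_{\mathcal{E}}F(e)^{\top}PE(e)\lambda(de),
\]
and $\hat K=-\hat R(P)^{-1}\hat S(P)$. Since $P>\mathbf{0}$ and $R>\mathbf{0}$, we have $\hat R(P)>\mathbf{0}$, so $\hat K$ is well defined.

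The first step is to verify that \eqref{eq8} gives the learner's problem an optimal feedback with gain $\hat K$. Expanding the $\hat K$-closed-loop Lyapunov operator applied to $P$ and using \eqref{eq8} yields
\[
(A+B\hat K)^{\top}P+P(A+B\hat K)+(C+D\hat K)^{\top}P(C+D\hat K)+\int_{\mathcal{E}}(E+F\hat K)^{\top}P(E+F\hat K)\lambda(de)=-N-\hat K^{\top}R\hat K<\mathbf{0}.
\]
By Lemma \ref{lemma 2.3}, $\hat K$ is therefore a stabilizer. This makes $P$ the stabilizing solution of the SREP, so, as for \eqref{expert gain} via \cite[Theorem~5.4]{Wu-Tang}, the optimal control of \eqref{learner function} is $u=\hat KX$. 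Alternatively, this can be confirmed directly by a completion-of-squares argument with Itô's formula for jump processes, which gives $\mathcal{J}(t,x;u)=x^{\top}Px+\mathbb{E}\int\langle\hat R(P)(u-\hat KX),u-\hat KX\rangle ds$.

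The second step is to show $K_\mathcal{T}=\hat K$. Expand \eqref{eq10} and subtract \eqref{eq8}. Writing $\Gamma$ for the $K$-independent terms, \eqref{eq10} reads
\[
\Gamma+N+K_\mathcal{T}^{\top}\hat S+\hat S^{\top}K_\mathcal{T}+K_\mathcal{T}^{\top}\hat R K_\mathcal{T}=\mathbf{0},
\]
while \eqref{eq8} reads $\Gamma+N-\hat S^{\top}\hat R^{-1}\hat S=\mathbf{0}$. Subtracting gives
\[
(K_\mathcal{T}+\hat R^{-1}\hat S)^{\top}\hat R\,(K_\mathcal{T}+\hat R^{-1}\hat S)=\mathbf{0},
\]
and positivity of $\hat R$ forces $K_\mathcal{T}=-\hat R^{-1}\hat S=\hat K$. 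Hence the learner's optimal gain coincides with $K_\mathcal{T}$, and $N$ is an equivalent weight by Definition \ref{definition2.2}.

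The main obstacle is the first step: justifying optimality among all admissible controls, not just feedback ones. I would handle it through the cited result of Wu–Tang, or through the Itô completion of squares. In the latter route, the boundary term $\mathbb{E}[X(s)^{\top}PX(s)]\to0$ as $s\to\infty$ is guaranteed by admissibility, i.e.\ $L^2$-stability. The algebraic identity in the second step is routine.
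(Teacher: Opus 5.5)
Your proposal is correct and follows essentially the paper's proof: you identify the learner's optimal gain as $\hat K=-\hat R(P)^{-1}\hat S(P)$ via \cite[Theorem~5.4]{Wu-Tang}, then take the difference of \eqref{eq8} and \eqref{eq10} to get $(K_\mathcal{T}-\hat K)^{\top}\hat R(P)(K_\mathcal{T}-\hat K)=\mathbf{0}$, which forces $K_\mathcal{T}=\hat K$ because $\hat R(P)>\mathbf{0}$. The paper leaves implicit that $\hat K$ is a stabilizer, i.e.\ that $P$ is the stabilizing solution to which the cited result applies; your check of this via Lemma~\ref{lemma 2.3}, and your alternative completion-of-squares argument, add worthwhile rigor.
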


\begin{proof} 
According to \cite[Theorem~5.4]{Wu-Tang}, the optimal control gain $K$ associated with SREP \eqref{eq8} is given by
\begin{align}
K=&-\left(R+D^{\top}PD+\int_{\mathcal{E}}{F(e)}^{\top}PF(e)\lambda(de)\right)^{-1}\left(B^{\top}P+D^{\top}PC+\int_{\mathcal{E}}{F(e)}^{\top}PE(e)\lambda(de)\right). \label{eq9}
\end{align}
Rewrite SREP \eqref{eq8} using $K_\mathcal{T}$ in \eqref{expert gain} and $K$ in \eqref{eq9} as 
\begin{align}
&P(A+BK_\mathcal{T})+(A+BK_\mathcal{T})^{\top}P+(C+DK_\mathcal{T})^{\top}P(C+DK_\mathcal{T}) \nonumber\\
&+\int_{\mathcal{E}}(E(e)+F(e)K_\mathcal{T})^{\top}P(E(e)+F(e)K_\mathcal{T})\lambda(de)+N \nonumber \\
&+K_\mathcal{T}^{\top}(R+D^{\top}PD+\int_{\mathcal{E}}F(e)^{\top}PF(e)\lambda(de))K \nonumber \\
&+K^{\top}(R+D^{\top}PD+\int_{\mathcal{E}}F(e)^{\top}PF(e)\lambda(de))K_\mathcal{T} \nonumber \\
&-K_\mathcal{T}^{\top}(D^{\top}PD+\int_{\mathcal{E}}F(e)^{\top}PF(e)\lambda(de))K_\mathcal{T} \nonumber \\
&-K^{\top}(R+D^{\top}PD+\int_{\mathcal{E}}F(e)^{\top}PF(e)\lambda(de))K=0. \label{star}
\end{align}
To establish the relationship between $K$ and $K_\mathcal{T}$, we subtract \eqref{eq10} from \eqref{star}, obtaining 
\begin{equation} \label{star2}
\begin{aligned}
\left(K_\mathcal{T}-K\right)^{\top}\left(R+D^{\top}PD+\int_{\mathcal{E}}F(e)^{\top}PF(e)\lambda(de)\right)(K_\mathcal{T}-K)=\mathbf{0}.
\end{aligned}
\end{equation}

Since $R \in \mathbb{S}_{++}^{m}$ and $P \in \mathbb{S}_{++}^{n}$, \eqref{star2} implies $K=K_\mathcal{T}$. By Definition \ref{definition2.2}, $N$ is an equivalent weight to $N_\mathcal{T}$. This completes the proof.
\end{proof}

The above analysis shows that different cost weights may yield the same target feedback gain $K_\mathcal{T}$, and hence the equivalent cost functional to \eqref{target cost functional} subject to \eqref{expert-system} may not be unique. The following theorem characterizes the solution set of Problem (ISLQ). 

\begin{theorem}[\rm Equivalence of cost functionals]\label{theorem2.2}
{\rm Suppose that $P_\mathcal{T}\in \mathbb{S}_{++}^{n}$ is the unique solution to \eqref{eq4}. For given $R \in \mathbb{S}_{++}^{m}$ and $R_o=R_\mathcal{T}-R \in \mathbb{S}_{++}^{m}$, if there exist matrices $P_o \in \mathbb{S}_{++}^{n}$ and $N_o \in \mathbb{S}_{++}^{n}$ satisfying}
\begin{align}
B^{\top}P_{o}+D^{\top}P_{o}C+\int_{\mathcal{E}}F(e)^{\top}P_{o}E(e)\lambda(de)=-\left(R_{o}+D^{\top}P_{o}D+\int_{\mathcal{E}}F(e)^{\top}P_{o}F(e)\lambda(de)\right)K_\mathcal{T}, \label{eq37}
\end{align}
{\rm and}
\begin{align}
&P_{o}A+A^{\top}P_{o}+C^{\top}P_{o}C+\int_{\mathcal{E}}E(e)^{\top}P_{o}E(e)\lambda(de)+N_{o} \nonumber \\
&-{K_\mathcal{T}}^{\top}\left(R_{o}+D^{\top}P_{o}D+\int_{\mathcal{E}}F(e)^{\top}P_{o}F(e)\lambda(de)\right)K_\mathcal{T}=\mathbf{0}, \label{eq38}
\end{align}
{\rm then $\mathcal{J}_{o}(t,x;u(\cdot))$ in the form of \eqref{learner function} with $R_o$ and $N_o$ is equivalent to $\mathcal{J}_\mathcal{T}(t,x;u(\cdot))$. Moreover, $\mathcal{J}(t,x;u(\cdot))$ is also equivalent to $\mathcal{J}_\mathcal{T}(t,x;u(\cdot))$ and}
\begin{equation*}
\mathcal{J}_\mathcal{T}(t,x;u(\cdot))=\mathcal{J}(t,x;u(\cdot))+\mathcal{J}_{o}(t,x;u(\cdot)).
\end{equation*}
\end{theorem}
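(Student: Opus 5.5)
Define $P := P_\mathcal{T}-P_o$ and $N := N_\mathcal{T}-N_o$, and use linearity to transfer the Riccati structure of the target to the learner. Introduce the shorthand
\[
\mathcal{G}(Q)=QB+C^{\top}QD+\int_{\mathcal{E}}E(e)^{\top}QF(e)\lambda(de),\qquad \mathcal{H}(Q,S)=S+D^{\top}QD+\int_{\mathcal{E}}F(e)^{\top}QF(e)\lambda(de).
\]
Both maps are affine in their arguments. The target gain satisfies $\mathcal{G}(P_\mathcal{T})^{\top}=-\mathcal{H}(P_\mathcal{T},R_\mathcal{T})K_\mathcal{T}$, and \eqref{eq37} reads $\mathcal{G}(P_o)^{\top}=-\mathcal{H}(P_o,R_o)K_\mathcal{T}$. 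Subtracting, and using $R=R_\mathcal{T}-R_o$, gives
\[
\mathcal{G}(P)^{\top}=-\mathcal{H}(P,R)K_\mathcal{T}.
\]
Using the gain formula, the SREP \eqref{eq4} can be written as $\mathcal{L}(P_\mathcal{T})+N_\mathcal{T}-K_\mathcal{T}^{\top}\mathcal{H}(P_\mathcal{T},R_\mathcal{T})K_\mathcal{T}=0$, where $\mathcal{L}(Q)=QA+A^{\top}Q+C^{\top}QC+\int E^{\top}QE\,\lambda(de)$. Subtracting \eqref{eq38} then gives
\[
\mathcal{L}(P)+N-K_\mathcal{T}^{\top}\mathcal{H}(P,R)K_\mathcal{T}=0 .
\]

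The key step is to show that $(P,N)$ satisfies the hypotheses of Theorem \ref{theorem2.1}. Since $\mathcal{H}(P,R)$ is the difference $\mathcal{H}(P_\mathcal{T},R_\mathcal{T})-\mathcal{H}(P_o,R_o)$, it is not obviously invertible. If it is invertible, the relation $\mathcal{G}(P)^{\top}=-\mathcal{H}(P,R)K_\mathcal{T}$ shows that $K_\mathcal{T}=-\mathcal{H}(P,R)^{-1}\mathcal{G}(P)^{\top}$. Substituting this into the last display makes it exactly SREP \eqref{eq8}.

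For the Lyapunov equation \eqref{eq10}, expand $(C+DK_\mathcal{T})^{\top}P(C+DK_\mathcal{T})$ and the jump term. The cross terms combine into $\mathcal{G}(P)K_\mathcal{T}+K_\mathcal{T}^{\top}\mathcal{G}(P)^{\top}=-2K_\mathcal{T}^{\top}\mathcal{H}(P,R)K_\mathcal{T}$. The quadratic terms give $K_\mathcal{T}^{\top}(\mathcal{H}(P,R)-R)K_\mathcal{T}$. Adding $N+K_\mathcal{T}^{\top}RK_\mathcal{T}$ then reproduces exactly the reduced equation above, so \eqref{eq10} holds. This is routine algebra.

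The main obstacle is positivity: Theorem \ref{theorem2.1} requires $P,N\in\mathbb{S}^n_{++}$. I would argue as follows. Since $K_\mathcal{T}$ is a stabilizer (it is the target optimal gain) and \eqref{eq10} holds with $\Xi=N+K_\mathcal{T}^{\top}RK_\mathcal{T}$, Lemma \ref{lemma 2.3} yields $P\in\mathbb{S}^n_{++}$ provided $N>0$. I expect the statement implicitly presumes $N_\mathcal{T}>N_o$, or takes $N>0$ as part of "equivalent", so I would state this requirement explicitly. Once $P>0$ and $R>0$, the matrix $\mathcal{H}(P,R)$ is positive definite and hence invertible, which closes the gap flagged above.

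Theorem \ref{theorem2.1} then shows that $\mathcal{J}$ is equivalent to $\mathcal{J}_\mathcal{T}$. The same argument applied directly to $(P_o,N_o,R_o)$, all of which are positive definite by hypothesis, shows that $\mathcal{J}_o$ is equivalent as well. Finally, the decomposition $\mathcal{J}_\mathcal{T}=\mathcal{J}+\mathcal{J}_o$ follows immediately from linearity of the integrand, since $N+N_o=N_\mathcal{T}$ and $R+R_o=R_\mathcal{T}$.
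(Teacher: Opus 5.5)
Your proof is correct and follows the same route as the paper. Both set $P=P_\mathcal{T}-P_o$ and $N=N_\mathcal{T}-N_o$, subtract \eqref{eq37}--\eqref{eq38} from the target gain formula and \eqref{eq4} using linearity of the coefficient maps, and recover \eqref{eq8} with gain $K_\mathcal{T}$. Your relation $\mathcal{G}(P)^{\top}=-\mathcal{H}(P,R)K_\mathcal{T}$ has the right sign; the paper's intermediate display drops the minus.

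The real difference is the closing step. The paper simply inverts $R+D^{\top}PD+\int_{\mathcal{E}}F(e)^{\top}PF(e)\lambda(de)$ and appeals to \cite[Theorem~5.4]{Wu-Tang}, without checking invertibility or the positivity of $P$ and $N$. You instead derive \eqref{eq10} without any inversion, then use Lemma 2.1 to get $P>\mathbf{0}$ (hence invertibility), and finish with Theorem 2.1.

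The extra hypothesis you flag, $N_\mathcal{T}-N_o\in\mathbb{S}^n_{++}$, is genuinely needed and is not implied by the stated assumptions. Given \eqref{eq37}, equation \eqref{eq38} is just the Lyapunov equation, so for fixed $R_o$ the admissible $(P_o,N_o)$ form an affine set of dimension at least $\frac{n(n+1)}{2}-mn$. For $n=2$, $m=1$ and $R=\epsilon R_\mathcal{T}$, one can perturb $(P_o,N_o)=(1-\epsilon)(P_\mathcal{T},N_\mathcal{T})$ within this set. This keeps $P_o,N_o>\mathbf{0}$ while making $N=N_\mathcal{T}-N_o$ fail to be positive definite for small $\epsilon$. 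Stating the assumption explicitly is therefore the correct repair of the statement, not a gap in your argument.
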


\begin{proof}
Together with system \eqref{expert-system} and cost functional $\mathcal{J}_{o}(t,x;u(\cdot))$, we can formulate an auxiliary SLQ problem. By \eqref{eq37} and \eqref{eq38}, $P_{o}$ satisfies the following SREP
\begin{align}
    &P_{o}A+A^{\top}P_{o}+C^{\top}P_{o}C+\int_{\mathcal{E}}E(e)^{\top}P_{o}E(e)\lambda(de)+N_{o} \nonumber \\
    &-{K_o}^{\top}\left(R_{o}+D^{\top}P_{o}D+\int_{\mathcal{E}}F(e)^{\top}P_{o}F(e)\lambda(de)\right)K_o=\mathbf{0}. \label{eq38-a}
\end{align}
In this optimal problem, by \cite[Theorem~5.4]{Wu-Tang}, the corresponding optimal feedback gain $K_{o}$ is given by
\begin{equation*}
    \begin{aligned}
        K_{o}=&-\left(R_{o}+D^{\top}P_{o}D+\int_{\mathcal{E}}F(e)^{\top}P_{o}F(e)\lambda(de)\right)^{-1} \\ 
        &\left(B^{\top}P_{o}+D^{\top}P_{o}C+\int_{\mathcal{E}}F(e)^{\top}P_{o}E(e)\lambda(de)\right).
    \end{aligned}
\end{equation*}
Comparing to \eqref{eq37}, we have $K_{o}=K_\mathcal{T}$, which implies that $\mathcal{J}_{o}(t,x;u(\cdot))$ is equivalent to $\mathcal{J}_\mathcal{T}(t,x;u(\cdot))$.

Denote $P=P_\mathcal{T}-P_o$ and $N=N_\mathcal{T}-N_o$. Substituting $N_\mathcal{T} = N + N_o$, $P_\mathcal{T} = P + P_o$, and $R_\mathcal{T}=R+R_o$ into \eqref{eq4} gives 
\begin{align*}
&PA+P_{o}A+A^{\top}P+A^{\top}P_{o}+C^{\top}PC+C^{\top}P_{o}C\\
&+\int_{\mathcal{E}}E(e)^{\top}PE(e)\lambda(de)+\int_{\mathcal{E}}E(e)^{\top}P_{o}E(e)\lambda(de)+N+N_{o}\\
&-{K_\mathcal{T}}^{\top}\left(R+D^{\top}PD+\int_{\mathcal{E}}F(e)^{\top}PF(e)\lambda(de)\right)K_\mathcal{T}\\
&-{K_\mathcal{T}}^{\top}\left(R_{o}+D^{\top}P_{o}D+\int_{\mathcal{E}}F(e)^{\top}P_{o}F(e)\lambda(de)\right)K_\mathcal{T}=\mathbf{0}.
\end{align*}
Subtracting \eqref{eq38} from the above equation, we have
\begin{align}
P&A+A^{\top}P+C^{\top}PC+\int_{\mathcal{E}}E(e)^{\top}PE(e)\lambda(de)+N \nonumber\\
&-{K_\mathcal{T}}^{\top}(R+D^{\top}PD+\int_{\mathcal{E}}F(e)^{\top}PF(e)\lambda(de))K_\mathcal{T}=\mathbf{0}. \label{eq39}
\end{align}
Substituting $R_o=R_\mathcal{T}-R$ and $P_o=P_\mathcal{T}-P$ into \eqref{eq37} yields
\begin{align*}
&B^{\top}P+D^{\top}PC+\int_{\mathcal{E}}F(e)^{\top}PE(e)\lambda(de) \nonumber \\
=&-\left(R_\mathcal{T}+D^{\top}P_\mathcal{T}D+\int_{\mathcal{E}}F(e)^{\top}P_\mathcal{T}F(e)\lambda(de)\right)K_\mathcal{T} \nonumber \\
&+\left(R_{o}+D^{\top}P_{o}D+\int_{\mathcal{E}}F(e)^{\top}P_{o}F(e)\lambda(de)\right)K_\mathcal{T} \nonumber \\
=&\left(R+D^{\top}PD+\int_{\mathcal{E}}F(e)^{\top}PF(e)\lambda(de)\right)K_\mathcal{T},  
\end{align*}
then 
\begin{align}
    K_\mathcal{T}=&-\left(R+D^{\top}PD+\int_{\mathcal{E}}F(e)^{\top}PF(e)\lambda(de)\right)^{-1} \nonumber \\
    &\left(B^{\top}P+D^{\top}PC+\int_{\mathcal{E}}F(e)^{\top}PE(e)\lambda(de)\right). \label{eq*}
\end{align}
Inserting \eqref{eq*} into \eqref{eq39}, thus $P$ satisfies \eqref{eq8}. From \eqref{eq*} and \eqref{eq9}, we obtain $K=K_\mathcal{T}$. Therefore, $\mathcal{J}(t,x;u(\cdot))$ is also equivalent to $\mathcal{J}_\mathcal{T}(t,x;u(\cdot))$.

Substituting $N=N_\mathcal{T}-N_{o}$ and $R=R_\mathcal{T}-R_{o}$ into \eqref{target cost functional}, it holds that
\begin{align*}
\mathcal{J}_\mathcal{T}(t,x;u(\cdot))=\mathcal{J}(t,x;u(\cdot))+\mathcal{J}_{o}(t,x;u(\cdot)).
\end{align*}
This completes the proof.
\end{proof}

According to Theorem \hyperref[theorem2.2]{2.2}, $N$ is not unique and target cost functional $\mathcal{J}_\mathcal{T}(t,x;u(\cdot))$ can be represented as the sum of two equivalent cost functionals.

\section{Inverse Q-Learning Algorithm for Problem (ISLQ)} \label{section3}
~~~~In this section, we develop an off-policy inverse Q-learning algorithm for solving Problem (ISLQ), where the expert's demonstrated trajectories $X_\mathcal{T}(\cdot)$ and the target control gain $K_{\mathcal{T}}$ are available. By formulating the inverse problem through the Q-function, the proposed method is implemented using these demonstrations, without requiring any information about the system dynamics or jump intensity. This algorithm is off-policy in the sense that the behavior policy used for data collection is decoupled from the feedback gains updated during iteration. The convergence and stability of the proposed algorithm are then rigorously established.

\subsection{Stability and Convergence of an Inverse Q-Learning Algorithm}
~~~~First, we define a $Q$-function as follows.
\begin{align}
Q\left(x,u\right) :=&~\left[\begin{array}{c}
x \\
u
\end{array}
\right]^{\top} {\mathbf{Q}} \left[\begin{array}{c}
x \\
u
\end{array}
\right] \nonumber\\
=&~\left(u-\Gamma({\mathbf{Q}})x\right)^{\top}\left(R+D^{\top}PD+\int_{\mathcal{E}}F(e)^{\top}PF(e)\lambda(de)\right)\left(u-\Gamma({\mathbf{Q}})x\right) \nonumber\\
& 
+ {x}^{\top}\Pi({\mathbf{Q}})x , \label{eqC1}
\end{align}
where 
\begin{align*}\mathbf{Q}&=\left[\begin{array}{cc}
Q_{xx}(P) & Q_{xu}(P) \\
Q_{ux}(P) & Q_{uu}(P)
\end{array}
\right],\\
\Pi({\mathbf{Q}})&=Q_{xx}(P)-Q_{xu}(P)\left(Q_{xx}(P)\right)^{-1}Q_{ux}(P),\\
\Gamma({\mathbf{Q}}) &=-\left(Q_{uu}(P)\right)^{-1}Q_{ux}(P)
\end{align*}
with
\begin{align*}
Q_{xx}(P) &= PA+A^{\top}P+C^{\top}PC+\int_{\mathcal{E}}{E(e)}^{\top}PE(e)\lambda(de)+N+P,\\
Q_{ux}(P) &= {Q_{xu}}^{\top} = B^{\top}P+D^{\top}PC+\int_{\mathcal{E}}{F(e)}^{\top}PE(e)\lambda(de),\\
Q_{uu}(P) &= R+D^{\top}PD+\int_{\mathcal{E}}F(e)^{\top}PF(e)\lambda(de). 
\end{align*}
For simplicity, we denote 
\begin{align*}
Q_{xx}^{(i+1)}&=Q_{xx}\left(P^{(i+1)}\right), &
Q_{ux}^{(i+1)}&={Q_{xu}^{(i+1)}}^{\top}=Q_{ux}\left(P^{(i+1)}\right),\\
Q_{uu}^{(i+1)}&=Q_{uu}\left(P^{(i+1)}\right), &
\mathbf{Q}^{(i+1)}&=\left[\begin{array}{cc}
Q_{xx}^{(i+1)} & Q_{xu}^{(i+1)} \\
Q_{ux}^{(i+1)} & Q_{uu}^{(i+1)}
\end{array}
\right].
\end{align*}

Now, we present the inverse Q-learning algorithm. 

\begin{algorithm}[H]
\caption{\rm Inverse Q-learning Algorithm} \label{Algorithm 3} 
1: {\bf Initialization}: For a given $R>{\bf 0}$, choose an initial $N^{(0)}>{\bf 0}$. Let $i=0$ and $\varepsilon>0$.\\
2: {\bf do} $\{$\\
3: \textbf{Q-function Evaluation}: Solve $\mathbf{Q}^{(i+1)}$ by the equation\\
\begin{align}\label{algorithm3-correction}
&\begin{bmatrix}
X_\mathcal{T}(t) \\
u_\mathcal{T}(t)
\end{bmatrix}^{\top} {\mathbf{Q}^{(i+1)}} \begin{bmatrix}
X_\mathcal{T}(t) \\
u_\mathcal{T}(t)
\end{bmatrix}
- \mathbb{E}^{\mathcal{F}_t}\left\{\begin{bmatrix}
X_\mathcal{T}(t+\Delta t) \\
u_\mathcal{T}(t+\Delta t)
\end{bmatrix}^{\top} {\mathbf{Q}^{(i+1)}} \begin{bmatrix}
X_\mathcal{T}(t+\Delta t) \\
u_\mathcal{T}(t+\Delta t)
\end{bmatrix}\right\} \nonumber\\
=&~\mathbb{E}^{\mathcal{F}_t}\int_t^{t+\Delta t}{X_\mathcal{T}(s)}^{\top}\left[N^{(i)}+{K_\mathcal{T}}^{\top}RK_\mathcal{T}\right]X_\mathcal{T}(s)ds.
\end{align}\\
4: \textbf{Policy Improvement}: Update $K^{(i+1)}$ by\\
\begin{align} \label{algorithm3-update}
K^{(i+1)}= &~\Gamma(\mathbf{Q}^{(i+1)})= -\left(Q_{uu}^{(i+1)}\right)^{-1}Q_{ux}^{(i+1)}.
\end{align}
5: \textbf{Weight Update}: Update $N^{(i+1)}$ via the identity\\
\begin{align} \label{algorithm3-reconstruction}
N^{(i+1)}=N^{(i)}+\left(K_\mathcal{T}-K^{(i+1)}\right)^{\top}Q_{uu}^{(i+1)}\left(K_\mathcal{T}-K^{(i+1)}\right).
\end{align}
6: $i\leftarrow i+1$.\\
7: $\}$ {\bf until} $\|N^{(i+1)}-N^{(i)}\|\leq \varepsilon$. 
\end{algorithm}

\begin{proposition} \label{proposition5.3}
{\rm Solving equation \eqref{algorithm3-correction} is equivalent to solving the following Lyapunov recursion:
\begin{align}
P^{(i+1)}&(A+BK_\mathcal{T})+(A+BK_\mathcal{T})^{\top}P^{(i+1)}+(C+DK_\mathcal{T})^{\top}P^{(i+1)}(C+DK_\mathcal{T}) \nonumber\\
&+\int_{\mathcal{E}}(E(e)+F(e)K_\mathcal{T})^{\top}P^{(i+1)}(E(e)+F(e)K_\mathcal{T})\lambda(de)+N^{(i)}+K_\mathcal{T}^{\top}RK_\mathcal{T}=\mathbf{0}. \label{algorithm 1-correction}
\end{align}
}
\end{proposition}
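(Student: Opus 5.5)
The plan is to convert the data equation \eqref{algorithm3-correction} into a statement about $P^{(i+1)}$ alone, using the structure of $\mathbf{Q}^{(i+1)}$ and the Itô formula for jump-diffusions. I will argue on the parametrized family $\mathbf{Q}=\mathbf{Q}(P)$ given by $Q_{xx}(P),Q_{ux}(P),Q_{uu}(P)$ with $N=N^{(i)}$. Along the expert data $u_\mathcal{T}=K_\mathcal{T}X_\mathcal{T}$, the first step is the algebraic identity
\[
\begin{bmatrix} x\\ K_\mathcal{T}x\end{bmatrix}^{\top}\mathbf{Q}(P)\begin{bmatrix} x\\ K_\mathcal{T}x\end{bmatrix}
= x^{\top}\big(\mathcal{L}_{K_\mathcal{T}}(P)+N^{(i)}+K_\mathcal{T}^{\top}RK_\mathcal{T}+P\big)x .
\]
Here $\mathcal{L}_{K}(P)$ denotes the left-hand Lyapunov operator appearing in \eqref{algorithm 1-correction}, namely
\[
\mathcal{L}_{K}(P):=P(A+BK)+(A+BK)^{\top}P+(C+DK)^{\top}P(C+DK)+\int_{\mathcal{E}}(E(e)+F(e)K)^{\top}P(E(e)+F(e)K)\lambda(de).
\]
The identity is obtained by expanding $Q_{xx}+Q_{xu}K_\mathcal{T}+K_\mathcal{T}^{\top}Q_{ux}+K_\mathcal{T}^{\top}Q_{uu}K_\mathcal{T}$ and regrouping the $B,D,F$ cross terms into the closed-loop matrices.

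Next, apply Itô's formula for processes driven by $W$ and $\widetilde N$ to $X_\mathcal{T}(s)^{\top}P X_\mathcal{T}(s)$ on $[t,t+\Delta t]$. The drift is exactly $X_\mathcal{T}^{\top}\mathcal{L}_{K_\mathcal{T}}(P)X_\mathcal{T}$; the compensated jump term contributes $\int_{\mathcal{E}}(E+FK_\mathcal{T})^{\top}P(E+FK_\mathcal{T})\lambda(de)$ after compensation. The $dW$ and $\widetilde N$ integrals are true martingales because $X_\mathcal{T}\in L^2_{\mathbb F}$ and $E,F\in L^{\lambda,2}$, so they vanish under $\mathbb{E}^{\mathcal{F}_t}$. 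This gives
\[
\mathbb{E}^{\mathcal{F}_t}\big[X_\mathcal{T}(t+\Delta t)^{\top}PX_\mathcal{T}(t+\Delta t)\big]-X_\mathcal{T}(t)^{\top}PX_\mathcal{T}(t)=\mathbb{E}^{\mathcal{F}_t}\int_t^{t+\Delta t}X_\mathcal{T}^{\top}\mathcal{L}_{K_\mathcal{T}}(P)X_\mathcal{T}\,ds .
\]

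For the direction ``\eqref{algorithm 1-correction} $\Rightarrow$ \eqref{algorithm3-correction}'': if $P^{(i+1)}$ solves \eqref{algorithm 1-correction}, which Lemma \ref{lemma 2.3} guarantees uniquely since $K_\mathcal{T}$ is a stabilizer and $N^{(i)}+K_\mathcal{T}^{\top}RK_\mathcal{T}>\mathbf{0}$, then the identity of the first step collapses the Q-quadratic form to $x^{\top}P^{(i+1)}x$. The Itô relation above, with the Lyapunov equation substituted into the drift, is then precisely \eqref{algorithm3-correction}.

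For the converse, assume $\mathbf{Q}(P)$ satisfies \eqref{algorithm3-correction} for all $t$, $\Delta t$ along the data. Subtracting the Itô relation yields
\[
\mathbb{E}^{\mathcal{F}_t}\int_t^{t+\Delta t}X_\mathcal{T}^{\top}M X_\mathcal{T}\,ds=\text{(terms involving }M\text{ at the endpoints)},\qquad M:=\mathcal{L}_{K_\mathcal{T}}(P)+N^{(i)}+K_\mathcal{T}^{\top}RK_\mathcal{T}.
\]
The endpoint terms come from the extra $x^{\top}Mx$ that the identity of the first step adds to $x^{\top}Px$. I would rearrange this into a relation that forces $M=\mathbf{0}$: divide by $\Delta t$, let $\Delta t\to 0$ using right-continuity of the conditional expectation, and use that the symmetric quadratic form vanishes along sufficiently rich (persistently excited) states to conclude $M=\mathbf{0}$, i.e.\ \eqref{algorithm 1-correction}. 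Uniqueness in Lemma \ref{lemma 2.3} then identifies $P$ with $P^{(i+1)}$.

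The main obstacle is this converse direction. Because $u_\mathcal{T}=K_\mathcal{T}X_\mathcal{T}$ is collinear with the state, equation \eqref{algorithm3-correction} alone only sees the compressed form $Q_{xx}+Q_{xu}K_\mathcal{T}+K_\mathcal{T}^{\top}Q_{ux}+K_\mathcal{T}^{\top}Q_{uu}K_\mathcal{T}$, not all blocks of $\mathbf{Q}$. So the equivalence must be understood within the structured family $\mathbf{Q}(P)$, or else with the probing noise mentioned in the introduction providing the rank condition. I would state this interpretation explicitly and handle the extraction of $M=\mathbf{0}$ via a short-time limit combined with the excitation of the data.
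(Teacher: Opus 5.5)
Your forward implication is correct and is essentially the paper's argument. The paper reaches the pointwise identity $[x;K_\mathcal{T}x]^\top\mathbf{Q}^{(i+1)}[x;K_\mathcal{T}x]=x^\top P^{(i+1)}x$ by combining the It\^o relation with the Lyapunov equation and then taking a short-time limit. You read it off directly from the expansion $Q_{xx}+Q_{xu}K_\mathcal{T}+K_\mathcal{T}^\top Q_{ux}+K_\mathcal{T}^\top Q_{uu}K_\mathcal{T}=\mathcal{L}_{K_\mathcal{T}}(P)+N^{(i)}+K_\mathcal{T}^\top RK_\mathcal{T}+P$, which is cleaner. Note that the paper proves only this direction and then asserts equivalence.

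The gap is your converse, and it cannot be closed as you describe. Carry out your own reduction with $M$ as you define it and $m(\tau)=\mathbb{E}^{\mathcal{F}_t}[X_\mathcal{T}(\tau)^\top MX_\mathcal{T}(\tau)]$. Within the family $\mathbf{Q}(P)$, equation \eqref{algorithm3-correction} is then exactly $m(t)-m(t+\Delta t)=\int_t^{t+\Delta t}m(s)\,ds$. The endpoint difference does not disappear in the limit: dividing by $\Delta t$ and letting $\Delta t\to 0$ gives $-x^\top\mathcal{L}_{K_\mathcal{T}}(M)x=x^\top Mx$. So excitation of the states yields only $(\mathcal{L}_{K_\mathcal{T}}+I)(M)=\mathbf{0}$. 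This forces $M=\mathbf{0}$ only when $-1$ is not an eigenvalue of $\mathcal{L}_{K_\mathcal{T}}$ on $\mathbb{S}^n$, and that can fail.

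Take $n=m=1$, $C=D=0$, $E=F=0$, $A=0$, $B=1$, $N_\mathcal{T}=1$, $R_\mathcal{T}=4$. Then $P_\mathcal{T}=2$, $K_\mathcal{T}=-\tfrac12$, and $\mathcal{L}_{K_\mathcal{T}}(P)=-P$. Your compressed form equals $x^2(N^{(i)}+\tfrac14 R)$ for every $P$. Since $\frac{d}{ds}X_\mathcal{T}(s)^2=-X_\mathcal{T}(s)^2$, equation \eqref{algorithm3-correction} holds for every $P$. By contrast, \eqref{algorithm 1-correction} has the single solution $P=N^{(i)}+\tfrac14 R$. The same degeneracy occurs with genuine diffusion and jumps whenever $-1\in\sigma(\mathcal{L}_{K_\mathcal{T}})$.

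So the converse is false without an extra hypothesis. A sufficient one is $-1\notin\sigma(\mathcal{L}_{K_\mathcal{T}})$, together with states whose outer products span $\mathbb{S}^n$. Otherwise you should claim only the forward implication. Probing noise does not rescue the converse either. With $u\neq K_\mathcal{T}X_\mathcal{T}$, the identity $[x;u]^\top\mathbf{Q}(P)[x;u]=x^\top Px$ behind your forward direction no longer holds, and the state dynamics change too. The noise therefore changes the equation rather than supplying the missing implication.
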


\begin{proof}
Applying $\mathrm{It\hat{o}}$'s formula to ${X_\mathcal{T}(s)}^{\top}P^{(i+1)}X_\mathcal{T}(s)$, we have
\begin{align}
&d\left[{X_\mathcal{T}(s)}^{\top}P^{(i+1)}X_\mathcal{T}(s)\right] \nonumber\\
=&~\left\{{X_\mathcal{T}(s)}^{\top}\left(P^{(i+1)}A+A^{\top}P^{(i+1)}+C^{\top}P^{(i+1)}C+\int_{\mathcal{E}}{E(e)}^{\top}P^{(i+1)}E(e)\lambda(de)\right)X_\mathcal{T}(s)\right. \nonumber\\
&+u_\mathcal{T}(s)^{\top}\left(B^{\top}P^{(i+1)}+D^{\top}P^{(i+1)}C+\int_{\mathcal{E}}{F(e)}^{\top}P^{(i+1)}E(e)\lambda(de)\right)X_\mathcal{T}(s) \nonumber\\
&+X_\mathcal{T}(s)^{\top}\left(P^{(i+1)}B+C^{\top}P^{(i+1)}D+\int_{\mathcal{E}}{E(e)}^{\top}P^{(i+1)}F(e)\lambda(de)\right)u_\mathcal{T}(s) \nonumber\\
&\left.+u_\mathcal{T}(s)^{\top}\left(D^{\top}P^{(i+1)}D+\int_{\mathcal{E}}F(e)^{\top}P^{(i+1)}F(e)\lambda(de)\right)u_\mathcal{T}(s)\right\}ds \nonumber\\
&+\left\{\dots\right\}dW(s)+\left\{\dots\right\}\widetilde{N}(ds,de). \label{eq16}
\end{align}
Integrating \eqref{eq16} from $t$ to $t+\Delta t$ and taking the conditional expectation $\mathbb{E}^{\mathcal{F}_t}$, we deduce that
\begin{align}
&~\mathbb{E}^{\mathcal{F}_t}\left[{X_\mathcal{T}(t+\Delta t)^{\top}}P^{(i+1)}X_\mathcal{T}(t+\Delta t)-X_\mathcal{T}(t)^{\top}P^{(i+1)}X_\mathcal{T}(t)\right] \nonumber\\
= &~\mathbb{E}^{\mathcal{F}_t}\int_t^{t+\Delta t}\left[\begin{array}{c}
X_\mathcal{T}(s) \\
u_\mathcal{T}(s)
\end{array}
\right]^{\top}\left[\begin{array}{cc}
Q^{(i+1)}_{xx}-P^{(i+1)}-N^{(i)} & Q^{(i+1)}_{xu} \\
Q^{(i+1)}_{ux} & Q^{(i+1)}_{uu}-R
\end{array}
\right] \left[\begin{array}{c}
X_\mathcal{T}(s) \\
u_\mathcal{T}(s)
\end{array}
\right]ds.\label{eq16-b}
\end{align}
From \eqref{algorithm 1-correction}, substituting $u_\mathcal{T}=K_\mathcal{T}X_\mathcal{T}$ into \eqref{eq16-b}, we obtain
\begin{align}
&X_\mathcal{T}(t)^{\top}P^{(i+1)}X_\mathcal{T}(t)-\mathbb{E}^{\mathcal{F}_t}\left[{X_\mathcal{T}(t+\Delta t)^{\top}}P^{(i+1)}X_\mathcal{T}(t+\Delta t)\right] \nonumber\\
= & -\mathbb{E}^{\mathcal{F}_t}\int_t^{t+\Delta t}{X_\mathcal{T}(s)}^{\top}\left\{P^{(i+1)}(A+BK_\mathcal{T})+(A+BK_\mathcal{T})^{\top}P^{(i+1)}+(C+DK_\mathcal{T})^{\top}P^{(i+1)}(C+DK_\mathcal{T})\right. \nonumber\\
&\left.+\int_{\mathcal{E}}(E(e)+F(e)K_\mathcal{T})^{\top}P^{(i+1)}(E(e)+F(e)K_\mathcal{T})\lambda(de)\right\}X_\mathcal{T}(s)ds \nonumber\\
=&~\mathbb{E}^{\mathcal{F}_t}\int_t^{t+\Delta t}{X_\mathcal{T}(s)}^{\top}\left[N^{(i)}+{K_\mathcal{T}}^{\top}RK_\mathcal{T}\right]X_\mathcal{T}(s)ds. \label{eq17}
\end{align} 
Adding \eqref{eq16-b} and \eqref{eq17}, we have 
\begin{align}
&\mathbb{E}^{\mathcal{F}_t}\int_t^{t+\Delta t}\left[\begin{array}{c}
X_\mathcal{T}(s) \\
u_\mathcal{T}(s)
\end{array}
\right]^{\top}\left[\begin{array}{cc}
Q^{(i+1)}_{xx}-P^{(i+1)} & Q^{(i+1)}_{xu} \\
Q^{(i+1)}_{ux} & Q^{(i+1)}_{uu}
\end{array}
\right] \left[\begin{array}{c}
X_\mathcal{T}(s) \\
u_\mathcal{T}(s)
\end{array}
\right]ds =\mathbf{0}. \label{eq18}
\end{align}
Dividing both sides of \eqref{eq18} by $\Delta t$ and letting $\Delta t \to 0$, we obtain 
\begin{align}
&\left[\begin{array}{c}
X_\mathcal{T}(t) \\
u_\mathcal{T}(t)
\end{array}
\right]^{\top}
\left[\begin{array}{cc}
Q^{(i+1)}_{xx}-P^{(i+1)} & Q^{(i+1)}_{xu} \\
Q^{(i+1)}_{ux} & Q^{(i+1)}_{uu}
\end{array}
\right]
\left[\begin{array}{c}
X_\mathcal{T}(t) \\
u_\mathcal{T}(t)
\end{array}
\right]ds =\mathbf{0}. \label{eq18-b}
\end{align}
Similarly, we also have
\begin{align}
&\mathbb{E}^{\mathcal{F}_t}\left\{\left[\begin{array}{c}
X_\mathcal{T}(t+\Delta t) \\
u_\mathcal{T}(t+\Delta t)
\end{array}
\right]^{\top}\left[\begin{array}{cc}
Q^{(i+1)}_{xx}-P^{(i+1)} & Q^{(i+1)}_{xu} \\
Q^{(i+1)}_{ux} & Q^{(i+1)}_{uu}
\end{array}
\right] \left[\begin{array}{c}
X_\mathcal{T}(t+\Delta t) \\
u_\mathcal{T}(t+\Delta t)
\end{array}
\right]\right\}ds =\mathbf{0}. \label{eq18-c}
\end{align}

Adding \eqref{eq18-b} and \eqref{eq18-c} to \eqref{eq17}, we obtain \eqref{algorithm3-correction}. Therefore, \eqref{algorithm3-correction} is equivalent to \eqref{algorithm 1-correction}. This concludes the proof.
\end{proof}

The following proposition shows that $N^{(i+1)}$ can be updated from the Q-function matrix without using any system information.

\begin{proposition} \label{proposition5.4}
{\rm Solving for $N^{(i+1)}$ from \eqref{algorithm3-reconstruction} is equivalent to solving the following equation: 
\begin{align}
N^{(i+1)}=&-P^{(i+1)}A-A^{\top}P^{(i+1)}-C^{\top}P^{(i+1)}C-\int_{\mathcal{E}}E(e)^{\top}P^{(i+1)}E(e)\lambda(de) \nonumber\\
&+{K^{(i+1)}}^{\top}(R+D^{\top}P^{(i+1)}D+\int_{\mathcal{E}}F(e)^{\top}P^{(i+1)}F(e)\lambda(de))K^{(i+1)}. \label{algorithm 1-reconstruction}
\end{align}
}
\end{proposition}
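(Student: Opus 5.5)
The plan is a direct algebraic computation. We use the Lyapunov recursion \eqref{algorithm 1-correction} from Proposition~\ref{proposition5.3}, which the $P^{(i+1)}$ underlying $\mathbf{Q}^{(i+1)}$ satisfies, together with the definitions of $Q_{ux}^{(i+1)}$, $Q_{uu}^{(i+1)}$ and $K^{(i+1)}=-(Q_{uu}^{(i+1)})^{-1}Q_{ux}^{(i+1)}$ from \eqref{algorithm3-update}. The idea is the same completing-the-square manipulation as in the proof of Theorem~\ref{theorem2.1}, now run with $N^{(i)}$ on the Lyapunov side and $N^{(i+1)}$ on the Riccati side.

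First, introduce the shorthand
\[
L^{(i+1)}:=P^{(i+1)}A+A^{\top}P^{(i+1)}+C^{\top}P^{(i+1)}C+\int_{\mathcal{E}}E(e)^{\top}P^{(i+1)}E(e)\lambda(de).
\]
Expanding the products in \eqref{algorithm 1-correction} in powers of $K_\mathcal{T}$ rewrites it as
\[
L^{(i+1)}+K_\mathcal{T}^{\top}Q_{ux}^{(i+1)}+{Q_{ux}^{(i+1)}}^{\top}K_\mathcal{T}+K_\mathcal{T}^{\top}Q_{uu}^{(i+1)}K_\mathcal{T}+N^{(i)}=\mathbf{0}.
\]
The $K_\mathcal{T}^{\top}RK_\mathcal{T}$ term is absorbed into $Q_{uu}^{(i+1)}$ because $Q_{uu}$ contains $R$. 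Next, substitute $Q_{ux}^{(i+1)}=-Q_{uu}^{(i+1)}K^{(i+1)}$, which follows from \eqref{algorithm3-update} since $Q_{uu}^{(i+1)}$ is invertible; it is positive definite because $R>\mathbf{0}$, granted $P^{(i+1)}\geq\mathbf{0}$, which holds by Lemma~\ref{lemma 2.3} since $K_\mathcal{T}$ is stabilizing. This gives
\[
L^{(i+1)}+N^{(i)}-K_\mathcal{T}^{\top}Q_{uu}^{(i+1)}K^{(i+1)}-{K^{(i+1)}}^{\top}Q_{uu}^{(i+1)}K_\mathcal{T}+K_\mathcal{T}^{\top}Q_{uu}^{(i+1)}K_\mathcal{T}=\mathbf{0}.
\]

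Adding and subtracting ${K^{(i+1)}}^{\top}Q_{uu}^{(i+1)}K^{(i+1)}$ completes the square:
\[
N^{(i)}+(K_\mathcal{T}-K^{(i+1)})^{\top}Q_{uu}^{(i+1)}(K_\mathcal{T}-K^{(i+1)})=-L^{(i+1)}+{K^{(i+1)}}^{\top}Q_{uu}^{(i+1)}K^{(i+1)}.
\]
The left side is exactly the right side of \eqref{algorithm3-reconstruction}. The right side is exactly the right side of \eqref{algorithm 1-reconstruction}, once $Q_{uu}^{(i+1)}$ is expanded as $R+D^{\top}P^{(i+1)}D+\int_{\mathcal{E}}F(e)^{\top}P^{(i+1)}F(e)\lambda(de)$. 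So the two defining formulas for $N^{(i+1)}$ give the same matrix, which is the claimed equivalence. Each step is reversible given \eqref{algorithm 1-correction} and \eqref{algorithm3-update}, so the equivalence holds in both directions.

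The only delicate point is bookkeeping. I must make sure the expansion of $(C+DK_\mathcal{T})^{\top}P(C+DK_\mathcal{T})$ and of the jump integral produces precisely the cross terms $K_\mathcal{T}^{\top}Q_{ux}+Q_{ux}^{\top}K_\mathcal{T}$. I must also make sure the extra "$+N+P$" in the paper's definition of $Q_{xx}$ plays no role; it does not, since only $Q_{ux}$ and $Q_{uu}$ enter. Finally, I must justify invertibility of $Q_{uu}^{(i+1)}$, which comes from $R>\mathbf{0}$ and $P^{(i+1)}\geq\mathbf{0}$.
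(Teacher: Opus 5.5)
Your proof is correct and follows essentially the same route as the paper: both combine the Lyapunov recursion \eqref{algorithm 1-correction} with $Q_{ux}^{(i+1)}=-Q_{uu}^{(i+1)}K^{(i+1)}$ and complete the square in $K_\mathcal{T}-K^{(i+1)}$. The paper passes through the closed-loop form \eqref{eq21} of \eqref{algorithm 1-reconstruction}, whereas you work directly with the open-loop term $L^{(i+1)}$, which is slightly more economical; your justification that $Q_{uu}^{(i+1)}$ is invertible (via $R>\mathbf{0}$ and $P^{(i+1)}\geq\mathbf{0}$, the latter also needing $N^{(i)}\geq\mathbf{0}$, which holds inductively as in Theorem~\ref{theorem5.1}) is a detail the paper leaves implicit.
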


\begin{proof}
First, rewrite \eqref{algorithm 1-reconstruction} as 
\begin{align}
N^{(i+1)} =& -P^{(i+1)}(A+BK^{(i+1)})-(A+BK^{(i+1)})^{\top}P^{(i+1)}-(C+DK^{(i+1)})^{\top}P^{(i+1)}(C+DK^{(i+1)}) \nonumber\\
&-\int_{\mathcal{E}}(E(e)+F(e)K^{(i+1)})^{\top}P^{(i+1)}(E(e)+F(e)K^{(i+1)})\lambda(de)-{K^{(i+1)}}^{\top}RK^{(i+1)}, \label{eq21}
\end{align}
where $K^{(i+1)}$ is shown in \eqref{algorithm3-update}. 
By \eqref{algorithm 1-correction}, we have
\begin{align}
&~P^{(i+1)}(A+BK^{(i+1)})+(A+BK^{(i+1)})^{\top}P^{(i+1)}+(C+DK^{(i+1)})^{\top}P^{(i+1)}(C+DK^{(i+1)}) \nonumber\\
&+\int_{\mathcal{E}}(E(e)+F(e)K^{(i+1)})^{\top}P^{(i+1)}(E(e)+F(e)K^{(i+1)})\lambda(de) \nonumber\\
=&~P^{(i+1)}(A+BK_\mathcal{T})+(A+BK_\mathcal{T})^{\top}P^{(i+1)}+(C+DK_\mathcal{T})^{\top}P^{(i+1)}(C+DK_\mathcal{T}) \nonumber\\
&+\int_{\mathcal{E}}(E(e)+F(e)K_\mathcal{T})^{\top}P^{(i+1)}(E(e)+F(e)K_\mathcal{T})\lambda(de) \nonumber\\
&+\left[\left(K^{(i+1)}-K_\mathcal{T}\right)^{\top}\left(B^{\top}P^{(i+1)}+D^{\top}P^{(i+1)}C+\int_{\mathcal{E}}{F(e)}^{\top}P^{(i+1)}E(e)\lambda(de)\right) \right. \nonumber\\
&+\left(P^{(i+1)}B+C^{\top}P^{(i+1)}D+\int_{\mathcal{E}}{E(e)}^{\top}P^{(i+1)}F(e)\lambda(de)\right)\left(K^{(i+1)}-K_\mathcal{T}\right) \nonumber\\
&+(K^{(i+1)})^{\top}D^{\top}P^{(i+1)}DK^{(i+1)}-K_\mathcal{T}^{\top}D^{\top}P^{(i+1)}DK_\mathcal{T} \nonumber\\
&\left. +\int_{\mathcal{E}}(K^{(i+1)})^{\top}{F(e)}^{\top}P^{(i+1)}F(e)K^{(i+1)}\lambda(de)-\int_{\mathcal{E}}K_\mathcal{T}^{\top}{F(e)}^{\top}P^{(i+1)}F(e)K_\mathcal{T}\lambda(de)\right] \nonumber\\
=&-N^{(i)}-{K^{(i+1)}}^{\top}RK^{(i+1)}+{K^{(i+1)}}^{\top}Q_{uu}^{(i+1)}K^{(i+1)}-K_\mathcal{T}^{\top}Q_{uu}^{(i+1)}K_\mathcal{T}+\left(K^{(i+1)}-K_\mathcal{T}\right)^{\top}Q_{ux}^{(i+1)} \nonumber\\
&+Q_{xu}^{(i+1)}\left(K^{(i+1)}-K_\mathcal{T}\right). \label{eq26} 
\end{align}
From \eqref{algorithm3-update}, we obtain
\begin{equation*}
\begin{aligned}
Q_{ux}^{(i+1)}=-Q_{uu}^{(i+1)}K^{(i+1)}.
\end{aligned}
\end{equation*}
Substituting the above equation into \eqref{eq26}, one gets
\begin{align}
&P^{(i+1)}(A+BK^{(i+1)})+(A+BK^{(i+1)})^{\top}P^{(i+1)}+(C+DK^{(i+1)})^{\top}P^{(i+1)}(C+DK^{(i+1)}) \nonumber\\
&+\int_{\mathcal{E}}(E(e)+F(e)K^{(i+1)})^{\top}P^{(i+1)}(E(e)+F(e)K^{(i+1)})\lambda(de) \nonumber\\
=&-N^{(i)}-(K_\mathcal{T}-K^{(i+1)})^{\top}Q_{uu}^{(i+1)}(K_\mathcal{T}-K^{(i+1)})-(K^{(i+1)})^{\top}RK^{(i+1)}.\label{eq27}
\end{align}
Substituting \eqref{eq27} into \eqref{eq21} yields \eqref{algorithm3-reconstruction}. 

Conversely, if $P^{(i+1)}$ is obtained from \eqref{algorithm3-reconstruction}, then \eqref{algorithm 1-correction} and \eqref{algorithm3-reconstruction} imply \eqref{algorithm 1-reconstruction}.
\end{proof}

According to Propositions \hyperref[proposition5.3]{3.1} and \hyperref[proposition5.4]{3.2}, the data-driven iterations \cref{algorithm3-correction,algorithm3-reconstruction} are equivalent to dynamic-based iterations \cref{algorithm 1-correction,algorithm 1-reconstruction}. Then we analyze the theoretical property of the proposed Algorithm \hyperref[Algorithm 3]{1}.

The following theorem shows that $K^{(i+1)}$ generated by Algorithm \hyperref[Algorithm 3]{1} is a stabilizer for all $i=0,1,2,\dots$.

\begin{theorem}[\rm Stability] \label{theorem5.1}
{\rm Given an initial matrix $N^{(0)}>\mathbf{0}$ and the stabilizer $K_{\mathcal{T}}$, every policy $K^{(i)}$, $i=1,2,\dots$, generated by \eqref{algorithm3-update} is a stabilizer for system \eqref{expert-system}.}
\end{theorem}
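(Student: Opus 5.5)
We will use the Lyapunov characterization of stabilizers in Lemma \ref{lemma 2.3}. For each $i\ge 0$ the plan is to exhibit a matrix $P\in\mathbb{S}^n_{++}$ for which the Lyapunov operator associated with $K^{(i+1)}$ is negative definite. The natural candidate is $P=P^{(i+1)}$, the matrix produced by the Q-function evaluation step.

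\textbf{Step 1: $P^{(i+1)}$ is positive definite.} By Proposition \ref{proposition5.3}, solving \eqref{algorithm3-correction} is equivalent to solving the Lyapunov recursion \eqref{algorithm 1-correction} with closed-loop gain $K_\mathcal{T}$ and forcing term $\Xi=N^{(i)}+K_\mathcal{T}^{\top}RK_\mathcal{T}$. Since $K_\mathcal{T}$ is a stabilizer (it is the optimal gain of the target problem, hence admissible), the second part of Lemma \ref{lemma 2.3} gives a unique solution $P^{(i+1)}\in\mathbb{S}^n_{++}$, provided $\Xi>\mathbf{0}$. Because $R>\mathbf{0}$, it suffices to know $N^{(i)}>\mathbf{0}$. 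This holds by induction: $N^{(0)}>\mathbf{0}$ by initialization, and the update \eqref{algorithm3-reconstruction} adds the term $(K_\mathcal{T}-K^{(i+1)})^{\top}Q_{uu}^{(i+1)}(K_\mathcal{T}-K^{(i+1)})$. This term is $\ge\mathbf{0}$ because $Q_{uu}^{(i+1)}=R+D^{\top}P^{(i+1)}D+\int_{\mathcal{E}}F^{\top}P^{(i+1)}F\,\lambda(de)>\mathbf{0}$ whenever $P^{(i+1)}>\mathbf{0}$. The same fact shows $Q_{uu}^{(i+1)}$ is invertible, so $K^{(i+1)}$ in \eqref{algorithm3-update} is well defined. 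We therefore obtain $N^{(i+1)}\ge N^{(i)}>\mathbf{0}$ for all $i$.

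\textbf{Step 2: negative definiteness along $K^{(i+1)}$.} We use the identity \eqref{eq27} derived in the proof of Proposition \ref{proposition5.4}. It expresses the Lyapunov operator with gain $K^{(i+1)}$, evaluated at $P^{(i+1)}$, as
\[
-N^{(i)}-(K_\mathcal{T}-K^{(i+1)})^{\top}Q_{uu}^{(i+1)}(K_\mathcal{T}-K^{(i+1)})-(K^{(i+1)})^{\top}RK^{(i+1)}.
\]
This identity is obtained by completing the square, using $Q_{ux}^{(i+1)}=-Q_{uu}^{(i+1)}K^{(i+1)}$. The first term is negative definite by Step 1, and the remaining two terms are negative semidefinite. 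Hence the whole expression is $<\mathbf{0}$. Lemma \ref{lemma 2.3} with $P=P^{(i+1)}\in\mathbb{S}^n_{++}$ then shows that $K^{(i+1)}$ is a stabilizer for system \eqref{expert-system}.

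\textbf{Main obstacle.} The delicate point is Step 1: we must make sure that positivity of $P^{(i+1)}$ and of $N^{(i)}$ propagates jointly through the iteration, since each is used to establish the other at the next index. A secondary check is that \eqref{eq27} is a purely algebraic consequence of \eqref{algorithm 1-correction} and \eqref{algorithm3-update}, and does not rely on $K^{(i+1)}$ already being stabilizing; otherwise the argument would be circular. Inspecting the derivation of \eqref{eq26}–\eqref{eq27} confirms that it only expands the quadratic forms, so the argument is not circular.
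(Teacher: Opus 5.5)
Your proof is correct and follows the paper's argument essentially step for step. Both run an induction on $N^{(i)}>\mathbf{0}$, use Lemma 2.1 to get $P^{(i+1)}\in\mathbb{S}^n_{++}$ and $N^{(i+1)}\geq N^{(i)}$, show the Lyapunov operator along $K^{(i+1)}$ at $P^{(i+1)}$ is negative definite, and apply Lemma 2.1 again to conclude. The only cosmetic difference is that you write that operator via \eqref{eq27} as $-N^{(i)}-(K_\mathcal{T}-K^{(i+1)})^{\top}Q_{uu}^{(i+1)}(K_\mathcal{T}-K^{(i+1)})-(K^{(i+1)})^{\top}RK^{(i+1)}$, whereas the paper uses \eqref{eq27-a}, i.e.\ $-N^{(i+1)}-(K^{(i+1)})^{\top}RK^{(i+1)}$; these are the same matrix by the weight update \eqref{algorithm3-reconstruction}.
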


\begin{proof}
We prove the result by mathematical induction. Since $N^{(0)}>\mathbf{0}$ and $K_\mathcal{T}$ is a stabilizer of system \eqref{expert-system}, Lemma \hyperref[lemma 2.3]{2.1} implies that \eqref{algorithm 1-correction} admits a unique solution $P^{(1)}\in \mathbb{S}^n_{++}$. From \eqref{algorithm3-reconstruction}, we have $N^{(1)}\geq N^{(0)}>\mathbf{0}$. Then, \eqref{eq21} implies that $K^{(1)}$ is a stabilizer.

Next, suppose that $N^{(i)}>\mathbf{0}$ for some $i \geq 1$. Then, \eqref{algorithm 1-correction} admits a unique solution $P^{(i+1)}\in \mathbb{S}^n_{++}$. Since $R \in \mathbb{S}^m_{++}$ and $P^{(i+1)}\in \mathbb{S}^n_{++}$, from \eqref{algorithm3-reconstruction}, we have $N^{(i+1)} \geq N^{(i)} >\mathbf{0}$. Building upon this result, \eqref{eq21} gives
\begin{align}
&P^{(i+1)}(A+BK^{(i+1)})+(A+BK^{(i+1)})^{\top}P^{(i+1)}+(C+DK^{(i+1)})^{\top}P^{(i+1)}(C+DK^{(i+1)}) \nonumber\\
&+\int_{\mathcal{E}}(E(e)+F(e)K^{(i+1)})^{\top}P^{(i+1)}(E(e)+F(e)K^{(i+1)})\lambda(de) \nonumber\\
=&-N^{(i+1)}-(K^{(i+1)})^{\top}RK^{(i+1)} < \mathbf{0}.\label{eq27-a}
\end{align}
It follows from Lemma \hyperref[lemma 2.3]{2.1} that $K^{(i+1)}$ stabilizes system \eqref{expert-system} at each iteration of Algorithm \hyperref[Algorithm 3]{1}. 
\end{proof}

Next, we establish the convergence of Algorithm \hyperref[Algorithm 3]{1}.

\begin{theorem}[\rm Convergence and Optimality] \label{theorem4.2}
{\rm Given $R \in \mathbb{S}^m_{++}$ and $N \in \mathbb{S}^n_{++}$ such that \eqref{eq8} and \eqref{eq10} admit a unique solution $P \in \mathbb{S}^n_{++}$. If the initial $N^{(0)}$ satisfies ${\bf 0} < N^{(0)} < N$ in Algorithm \hyperref[Algorithm 3]{1}, then the sequences $\left\{P^{(i)}\right\}_{i=1}^{\infty}$, $\left\{K^{(i)}\right\}_{i=1}^{\infty}$ and $\left\{N^{(i)}\right\}_{i=1}^{\infty}$ generated by it have the following properties. }
\begin{enumerate}[1)]
\item {\rm The sequences $\left\{P^{(i)}\right\}_{i=1}^{\infty}$ and $\left\{N^{(i)}\right\}_{i=1}^{\infty}$ converge to some $P^*$ and $N^*$, respectively. Moreover, $P^*$ and $N^*$ satisfy}
\begin{align}
P^{*}&A+A^{\top}P^{*}+C^{\top}P^{*}C+\int_{\mathcal{E}}E(e)^{\top}P^{*}E(e)\lambda(de)-\left(P^{*}B+C^{\top}P^{*}D+\int_{\mathcal{E}}{E(e)}^{\top}P^{*}F(e)\lambda(de)\right) \nonumber\\
&\left(R+D^{\top}P^{*}D+\int_{\mathcal{E}}F(e)^{\top}P^{*}F(e)\lambda(de)\right)^{-1}\left(B^{\top}P^{*}+D^{\top}P^{*}C+\int_{\mathcal{E}}{F(e)}^{\top}P^{*}E(e)\lambda(de)\right) \nonumber\\
&+N^{*}=\mathbf{0}. \label{eq28}
\end{align}

\item {\rm The sequence $\left\{K^{(i)}\right\}_{i=1}^{\infty}$ converges to }
\begin{align}
K^{*}=-\left(R+D^{\top}P^{*}D+\int_{\mathcal{E}}F(e)^{\top}P^{*}F(e)\lambda(de)\right)^{-1}\left(B^{\top}P^{*}+D^{\top}P^{*}C+\int_{\mathcal{E}}F(e)^{\top}P^{*}E(e)\lambda(de)\right). \label{eq29}
\end{align}
\end{enumerate}
{\rm Furthermore, we have $K^{*}=K_\mathcal{T}$, and $(P,N)=(P^{*}, N^{*})$ satisfy \eqref{eq8} and \eqref{eq10}}.
\end{theorem}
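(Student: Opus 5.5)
The plan is a monotonicity argument. Work throughout with the model-based form of Algorithm \hyperref[Algorithm 3]{1}, namely \eqref{algorithm 1-correction}, \eqref{algorithm3-update} and \eqref{algorithm3-reconstruction}; this is legitimate by Propositions \hyperref[proposition5.3]{3.1} and \hyperref[proposition5.4]{3.2}. I will show that $\{N^{(i)}\}$ and $\{P^{(i)}\}$ are nondecreasing and bounded above by $N$ and $P$ respectively. Monotone convergence in $\mathbb{S}^n$ then gives the limits $P^*$ and $N^*$, and the limit equations give the rest.

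\textbf{Step 1 (monotonicity).} By the proof of Theorem \hyperref[theorem5.1]{3.1}, \eqref{algorithm3-reconstruction} gives $N^{(i+1)}\ge N^{(i)}$. Write $\mathcal{L}_{K}(X)=X(A+BK)+(A+BK)^{\top}X+(C+DK)^{\top}X(C+DK)+\int_{\mathcal{E}}(E(e)+F(e)K)^{\top}X(E(e)+F(e)K)\lambda(de)$. Subtracting two consecutive instances of \eqref{algorithm 1-correction} gives
\[
\mathcal{L}_{K_\mathcal{T}}\bigl(P^{(i+1)}-P^{(i)}\bigr)+\bigl(N^{(i)}-N^{(i-1)}\bigr)=\mathbf{0}.
\]
Since $K_\mathcal{T}$ is a stabilizer and the forcing term is in $\mathbb{S}^n_+$, Lemma \hyperref[lemma 2.3]{2.1} yields $P^{(i+1)}\ge P^{(i)}$.

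\textbf{Step 2 (upper bounds, the main obstacle).} I will prove by induction that $N^{(i)}\le N$, and hence $P^{(i+1)}\le P$. Set $\Delta=P-P^{(i+1)}$. Subtracting \eqref{algorithm 1-correction} from \eqref{eq10} gives
\[
\mathcal{L}_{K_\mathcal{T}}(\Delta)+N-N^{(i)}=\mathbf{0},
\]
so $\Delta\ge\mathbf{0}$ if $N^{(i)}\le N$, again by Lemma \hyperref[lemma 2.3]{2.1}. Next, \eqref{eq8} and \eqref{eq10} together give $Q_{ux}(P)=-Q_{uu}(P)K_\mathcal{T}$. Since $Q_{ux}$ and $Q_{uu}-R$ are linear in $P$, this yields
\[
Q_{uu}^{(i+1)}\bigl(K^{(i+1)}-K_\mathcal{T}\bigr)=M,\qquad M:=Q_{ux}(\Delta)+G(\Delta)K_\mathcal{T},
\]
where $Q_{ux}(\Delta)$ denotes the same linear expression $B^{\top}\Delta+D^{\top}\Delta C+\int_{\mathcal{E}}F(e)^{\top}\Delta E(e)\lambda(de)$ and $G(\Delta)=D^{\top}\Delta D+\int_{\mathcal{E}}F(e)^{\top}\Delta F(e)\lambda(de)$. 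The increment in \eqref{algorithm3-reconstruction} is then $M^{\top}\bigl(Q_{uu}^{(i+1)}\bigr)^{-1}M$, so it remains to prove
\[
M^{\top}\bigl(R+G(P^{(i+1)})\bigr)^{-1}M\le -\mathcal{L}_{K_\mathcal{T}}(\Delta).
\]
My first attempt will be a Schur-complement argument. I would expand $-\mathcal{L}_{K_\mathcal{T}}(\Delta)$ as the quadratic form of $\Delta$ evaluated along the direction $K_\mathcal{T}$ plus a perturbation $v$, and complete the square in $v$. The aim is to show that the maximal "gain" from the cross term $M$ is dominated by the curvature $R+G(P^{(i+1)})$, using $\Delta\ge\mathbf{0}$ and $P^{(i+1)}>\mathbf{0}$. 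I regard this inequality as the delicate step; the strict hypothesis $N^{(0)}<N$ starts the induction.

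\textbf{Step 3 (limits).} The bounded monotone sequences converge to some $P^*\le P$ and $N^*\le N$. Passing to the limit in \eqref{algorithm 1-correction} gives
\[
\mathcal{L}_{K_\mathcal{T}}(P^*)+N^*+K_\mathcal{T}^{\top}RK_\mathcal{T}=\mathbf{0},
\]
which is \eqref{eq10} for the pair $(P^*,N^*)$. Convergence of $N^{(i)}$ forces the increments $\bigl(K_\mathcal{T}-K^{(i+1)}\bigr)^{\top}Q_{uu}^{(i+1)}\bigl(K_\mathcal{T}-K^{(i+1)}\bigr)$ to tend to $\mathbf{0}$. Since $Q_{uu}^{(i+1)}\ge R>\mathbf{0}$, this gives $K^{(i)}\to K_\mathcal{T}$. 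By continuity of \eqref{algorithm3-update} in $P$, the limit $K^*$ equals the expression \eqref{eq29}, so $K^*=K_\mathcal{T}$. Passing to the limit in \eqref{algorithm 1-reconstruction} and substituting $K^*$ from \eqref{eq29} gives \eqref{eq28}, which is \eqref{eq8} for $(P^*,N^*)$. Finally, $P^*\ge P^{(1)}>\mathbf{0}$ and $N^*\ge N^{(0)}>\mathbf{0}$, so Theorem \hyperref[theorem2.1]{2.1} applies and $N^*$ is an equivalent weight.
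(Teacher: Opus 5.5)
\textbf{There is a genuine gap, at exactly the step you flagged, and it cannot be filled.} Your Steps 1 and 3 are sound. Step 1 is the paper's monotonicity argument, and your passage to the limit in Step 3 is cleaner than the paper's. Step 2, however, is not a lemma on the way to $N^{(i+1)}\le N$; it \emph{is} that claim. The left side $M^{\top}\bigl(R+G(P^{(i+1)})\bigr)^{-1}M$ is the increment $N^{(i+1)}-N^{(i)}$, and the right side $-\mathcal{L}_{K_\mathcal{T}}(\Delta)$ is $N-N^{(i)}$. This bound is false under the stated hypotheses.

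Take $n=m=2$, $A=0$, $B=R=I_2$, $C=D=0$, $E=F=0$. For any $P>\mathbf{0}$, the choices $N=P^2$ and $K_\mathcal{T}=-P$ satisfy \eqref{eq8} and \eqref{eq10}. Algorithm 1 then reduces to
\[
P^{(i+1)}P+PP^{(i+1)}=N^{(i)}+P^2,\qquad K^{(i+1)}=-P^{(i+1)},\qquad N^{(i+1)}=N^{(i)}+(P-P^{(i+1)})^2=(P^{(i+1)})^2 .
\]
So your inequality reads $(P^{(i+1)})^2\le P^2$, and squaring is not operator monotone.

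A concrete instance: take $P=\mathrm{diag}(18,2)$ and $N^{(0)}=\begin{pmatrix}162&17\\17&2\end{pmatrix}$.
\begin{itemize}
\item Both $N^{(0)}$ and $N-N^{(0)}$ have positive diagonal and determinant $35$, so $\mathbf{0}<N^{(0)}<N=\mathrm{diag}(324,4)$.
\item Solving the Lyapunov equation gives $P^{(1)}=\begin{pmatrix}27/2&17/20\\17/20&3/2\end{pmatrix}<P$.
\item Yet for $v=(1,10)^{\top}$ one gets $v^{\top}N^{(1)}v=735.2225>724=v^{\top}Nv$.
\end{itemize}

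The iteration then cannot converge at all. $N^{(i)}$ is nondecreasing. A limit would force the increments $(P-P^{(i+1)})^2\to\mathbf{0}$, hence $P^{(i)}\to P$ and $N^{(i)}\to P^2=N$. That contradicts $v^{\top}N^{(i)}v\ge v^{\top}N^{(1)}v>v^{\top}Nv$. Adding small diffusion or jump coefficients does not change this, by continuity. So no Schur-complement estimate will rescue Step 2: the theorem as stated needs an additional hypothesis that forces the upper bound.

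The paper's proof does not supply the missing step either. It obtains $N^{(i+1)}<N$ by subtracting \eqref{eq31} from \eqref{eq30}. It then asserts in \eqref{eq32} that $\mathcal{L}_{K^{(i+1)}}(P-P^{(i+1)})<\mathbf{0}$, because $K^{(i+1)}$ is a stabilizer and $P-P^{(i+1)}>\mathbf{0}$. Lemma 2.1 gives no such thing. Stability only yields \emph{some} positive definite $X$ with $\mathcal{L}_{K^{(i+1)}}(X)<\mathbf{0}$, not every positive definite $X$. In the example above, $\mathcal{L}_{K^{(1)}}(P-P^{(1)})=\begin{pmatrix}-120.055&8.5\\8.5&-0.055\end{pmatrix}$, which has negative determinant and is therefore indefinite. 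Your reduction and the paper's are two different routes to the same false bound, so importing the paper's argument into your Step 2 would not help.
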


\begin{proof}
From \eqref{algorithm 1-correction}, we obtain
\begin{align}
N^{(i-1)}-N^{(i)} =&~\left(P^{(i+1)}-P^{(i)}\right)(A+BK_\mathcal{T})+(A+BK_\mathcal{T})^{\top}\left(P^{(i+1)}-P^{(i)}\right) \nonumber\\
&+(C+DK_\mathcal{T})^{\top}\left(P^{(i+1)}-P^{(i)}\right)(C+DK_\mathcal{T}) \nonumber\\
&+\int_{\mathcal{E}}(E(e)+F(e)K_\mathcal{T})^{\top}\left(P^{(i+1)}-P^{(i)}\right)(E(e)+F(e)K_\mathcal{T})\lambda(de).
\end{align}
Since $K_\mathcal{T}$ is a stabilizer and $N^{(i-1)}-N^{(i)}\leq \mathbf{0}$, from Lemma \hyperref[lemma 2.3]{2.1}, we have $\mathbf{0}<P^{(i)}\leq P^{(i+1)}$ for $i\geq 1$. 
Rewrite Lyapunov equation \eqref{eq10} to obtain
\begin{align}
&P(A+BK^{(i+1)})+(A+BK^{(i+1)})^{\top}P+(C+DK^{(i+1)})^{\top}P(C+DK^{(i+1)}) \nonumber\\
&+\int_{\mathcal{E}}(E(e)+F(e)K^{(i+1)})^{\top}P(E(e)+F(e)K^{(i+1)})\lambda(de)+N \nonumber\\
&-(K_\mathcal{T}-K^{(i+1)})^{\top}(R+D^{\top}PD+\int_{\mathcal{E}}F(e)^{\top}PF(e)\lambda(de))(K_\mathcal{T}-K^{(i+1)})+{K^{(i+1)}}^{\top}RK^{(i+1)}=\mathbf{0}. \label{eq30}
\end{align}
Combining \eqref{algorithm3-update}, we can reformulate \eqref{algorithm 1-reconstruction} into the following equation
\begin{align}
 &P^{(i+1)}(A+BK^{(i+1)})+(A+BK^{(i+1)})^{\top}P^{(i+1)}+(C+DK^{(i+1)})^{\top}P^{(i+1)}(C+DK^{(i+1)}) \nonumber\\
&+\int_{\mathcal{E}}(E(e)+F(e)K^{(i+1)})^{\top}P^{(i+1)}(E(e)+F(e)K^{(i+1)})\lambda(de)+N^{(i+1)}+{K^{(i+1)}}^{\top}RK^{(i+1)} = \mathbf{0}. \label{eq31}
\end{align}
We next prove by mathematical induction that $\left\{P^{(i)}\right\}_{i=1}^{\infty}$ and $\left\{N^{(i)}\right\}_{i=1}^{\infty}$ are bounded above by $P$ and $N$. 

For $i=0$, since $N^{(0)}<N$ and $K_\mathcal{T}$ is a stabilizer, subtracting \eqref{algorithm 1-correction} from \eqref{eq10} gives $P^{(1)}<P$. Moreover, by Theorem \hyperref[theorem5.1]{3.1}, $K^{(1)}$ is also a stabilizer of system \eqref{expert-system}. Subtracting \eqref{eq31} from \eqref{eq30} implies $N^{(1)}<N$. Suppose that $N^{(i)} < N$ for some $i\geq 1$. Subtracting \eqref{algorithm 1-correction} from \eqref{eq10} yields
\begin{align*}
\left(P-P^{(i+1)}\right)&(A+BK_\mathcal{T})+(A+BK_\mathcal{T})^{\top}\left(P-P^{(i+1)}\right)+(C+DK_\mathcal{T})^{\top}\left(P-P^{(i+1)}\right)(C+DK_\mathcal{T}) \nonumber\\
&+\int_{\mathcal{E}}(E(e)+F(e)K_\mathcal{T})^{\top}\left(P-P^{(i+1)}\right)(E(e)+F(e)K_\mathcal{T})\lambda(de)+N-N^{(i)}=\mathbf{0},
\end{align*}
which shows $P-P^{(i+1)}\in \mathbb{S}^n_{++}$. Subtracting \eqref{eq31} from \eqref{eq30}, by Theorem \hyperref[theorem5.1]{3.1}, $K^{(i+1)}$ is a stabilizer, then
\begin{align}
\mathbf{0}>&(P-P^{(i+1)})(A+BK^{(i+1)})+(A+BK^{(i+1)})^{\top}(P-P^{(i+1)}) \nonumber\\
&+(C+DK^{(i+1)})^{\top}(P-P^{(i+1)})(C+DK^{(i+1)})\nonumber\\
&+\int_{\mathcal{E}}(E(e)+F(e)K^{(i+1)})^{\top}(P-P^{(i+1)})(E(e)+F(e)K^{(i+1)})\lambda(de) \nonumber\\
&=~N^{(i+1)}-N+(K_\mathcal{T}-K^{(i+1)})^{\top}\left(R+D^{\top}PD+\int_{\mathcal{E}}F(e)^{\top}PF(e)\lambda(de)\right)(K_\mathcal{T}-K^{(i+1)}). \label{eq32}
\end{align}
Since $R \in \mathbb{S}^m_{++}$, $P \in \mathbb{S}^n_{++}$, and $P-P^{(i+1)}\in \mathbb{S}^n_{++}$, we have $N^{(i+1)} < N$. Thus the generated sequences $\left\{P^{(i)}\right\}_{i=1}^{\infty}$ and $\left\{N^{(i)}\right\}_{i=1}^{\infty}$ are monotonically nondecreasing and bounded above by $P$ and $N$, respectively.

Next, we show that the limits $P^*$, $K^*$ and $N^{*}$ satisfy \eqref{eq28} and \eqref{eq29}. Substituting \eqref{algorithm 1-reconstruction} into \eqref{algorithm 1-correction} yields
\begin{align}
&~P^{(i+1)}A+A^{\top}P^{(i+1)}+C^{\top}P^{(i+1)}C+\int_{\mathcal{E}}E(e)^{\top}P^{(i+1)}E(e)\lambda(de) \nonumber\\
&+K_\mathcal{T}^{\top}\left(B^{\top}P^{(i+1)}+D^{\top}P^{(i+1)}C+\int_{\mathcal{E}}{F(e)}^{\top}P^{(i+1)}E(e)\lambda(de)\right) \nonumber\\
&+\left(P^{(i+1)}B+C^{\top}P^{(i+1)}D+\int_{\mathcal{E}}{E(e)}^{\top}P^{(i+1)}F(e)\lambda(de)\right)K_\mathcal{T} \nonumber\\
&+K_\mathcal{T}^{\top}\left(R+D^{\top}P^{(i+1)}D+\int_{\mathcal{E}}{F(e)}^{\top}P^{(i+1)}F(e)\lambda(de)\right)K_\mathcal{T} \nonumber\\
&=~P^{(i)}A+A^{\top}P^{(i)}+C^{\top}P^{(i)}C+\int_{\mathcal{E}}E(e)^{\top}P^{(i)}E(e)\lambda(de) \nonumber\\
&-{K^{(i)}}^{\top}\left(R+D^{\top}P^{(i)}D+\int_{\mathcal{E}}{F(e)}^{\top}P^{(i)}F(e)\lambda(de)\right)K^{(i)}. \label{star3}
\end{align}

Taking the limit on both sides of \eqref{star3}, we have
\begin{align*}
(K_\mathcal{T}-K^{*})^{\top}(R+D^{\top}P^{*}D+\int_{\mathcal{E}}F(e)^{\top}P^{*}F(e)\lambda(de))(K_\mathcal{T}-K^{*})=\mathbf{0},
\end{align*}
where $K^{*}$ satisfies \eqref{eq29}. Since $P^{*}\in \mathbb{S}^n_{++}$, it follows that $K^*=K_\mathcal{T}$. From \eqref{algorithm 1-reconstruction}, the limit $P^{*}$ and $N^{*}$ satisfy \eqref{eq28}.

Using $K^*=K_\mathcal{T}$, \eqref{eq28} can be rewritten as follows
\[
\begin{aligned}
&P^*(A+BK_\mathcal{T})+(A+BK_\mathcal{T})^{\top}P^*+(C+DK_\mathcal{T})^{\top}P^{*}(C+DK_\mathcal{T})\\
&+\int_{\mathcal{E}}(E(e)+F(e)K_\mathcal{T})^{\top}P^{*}(E(e)+F(e)K_\mathcal{T})\lambda(de)+N^{*}+{K_\mathcal{T}}^{\top}RK_\mathcal{T}=\mathbf{0},
\end{aligned}
\]
which coincides with \eqref{eq10}. Meanwhile, \eqref{eq28} is actually \eqref{eq8}. Finally, the limits $P^{*}$ and $N^{*}$ satisfy \eqref{eq8} and \eqref{eq10}. This completes the proof.
\end{proof}

\subsection{Implementation of Algorithm 1}

~~~~In this subsection, we implement the inverse Q-learning algorithm using vectorization and the Kronecker product, based on the expert's demonstrated state trajectories and the target control gain $K_\mathcal{T}$. Denote the actual input by $\widetilde{u}_\mathcal{T}=K_\mathcal{T}X_\mathcal{T}+e_\mathcal{T}$, where $e_\mathcal{T}$ is a bounded signal satisfying the persistence of excitation (PE) condition; see \cite{lian2024inverse, rizvi2019reinforcement, Xue-2021inverse}.

To calculate $\mathbf{Q}^{(i+1)}$ from \eqref{algorithm3-correction}, we define
\[
\begin{aligned}
\theta_{\mathcal{T}}(t):=&~\left(\left[X_\mathcal{T}(t),\widetilde{u}_\mathcal{T}(t)\right]\otimes\left[X_\mathcal{T}(t),\widetilde{u}_\mathcal{T}(t)\right]\right)^{\top},\\
\Theta_{\mathcal{T}}:=&~\mathbb{E}^{\mathcal{F}_t}\left[\theta_{\mathcal{T}}(t)-\theta_{\mathcal{T}}(t+\Delta t), \dots , \theta_{\mathcal{T}}(t+(l-1)\Delta t)-\theta_{\mathcal{T}}(t+l\Delta t)\right]^{\top},\\
\Lambda_{\mathcal{T}}^{(i)}:=&~\mathbb{E}^{\mathcal{F}_t}\left[\int_t^{t+\Delta t}{X_\mathcal{T}(s)}^{\top}(N^{(i)}+{K_\mathcal{T}}^{\top}RK_\mathcal{T})X_\mathcal{T}(s)ds, \dots, \right.\\
&~\qquad\qquad\left.\int_{t+(l-1)\Delta t}^{t+l\Delta t}{X_\mathcal{T}(s)}^{\top}(N^{(i)}+{K_\mathcal{T}}^{\top}RK_\mathcal{T})X_\mathcal{T}(s)ds\right]^{\top}.
\end{aligned}
\]
Note that $l$ should be no less than the number of unknown parameters of $\mathbf{Q}^{(i+1)}$, i.e. $l\geq \frac{n(n+1)}{2}+mn+\frac{m(m+1)}{2}$. 

Now we impose the following rank condition on the data matrix $\Theta_{\mathcal{T}}$. 
\begin{assumption} \label{lemma5.1}
{\rm There exists a constant $k_0 \geq \frac{n(n+1)}{2}+mn+\frac{m(m+1)}{2}$ such that for all $l\geq k_0$,}
\begin{equation}\label{rank condition2}
\begin{aligned}
rank(\Theta_{\mathcal{T}})=\frac{n(n+1)}{2}+mn+\frac{m(m+1)}{2}.
\end{aligned}
\end{equation}
\end{assumption}
\noindent Assumption \ref{lemma5.1} can be viewed as a PE condition. It should be emphasized that $e_\mathcal{T}$ is not merely introduced as general exploration noise. In fact, general exploration noise is used to perturb the control input and enrich the collected trajectories, while probing noise is imposed to satisfy the PE condition such that $\Theta_{\mathcal{T}}$ has full column rank and thus the Q-function matrix can be uniquely identified by the batch least squares method.

Under Assumption \ref{lemma5.1}, the matrix $\Theta_{\mathcal{T}}^{\top}\Theta_{\mathcal{T}}$ is invertible. Hence, $\mathbf{Q}^{(i+1)}$ is uniquely determined via the batch least squares method as
\begin{equation} \label{eq23}
\begin{aligned}
vec(\mathbf{Q}^{(i+1)})
=(\Theta_{\mathcal{T}}^{\top}\Theta_{\mathcal{T}})^{-1}\Theta_{\mathcal{T}}^{\top}\Lambda_{\mathcal{T}}^{(i)}.
\end{aligned} 
\end{equation}
By \eqref{algorithm3-reconstruction}, $N^{(i+1)}$ is updated directly. 

In summary, this section develops a model-free off-policy inverse Q-learning algorithm for solving Problem (ISLQ) using expert trajectories. By introducing the Q-function, the IOC problem is addressed within an IRL framework, where data-driven equations are constructed independently of the system parameters and jump intensity. The expert's demonstrated state trajectories are used to calculate the Q-function matrix, then the cost weight is updated without requiring additional trajectory data. In the next section, we further develop a complementary model-free off-policy inverse policy iteration algorithm based on the learner's trajectories.

\section{Model-Free Off-Policy Inverse Policy Iteration Algorithm for Problem (ISLQ)} \label{section4}
~~~~Unlike the inverse Q-learning algorithm in the previous section using data collected from the target expert, this section proposes a model-free off-policy inverse policy iteration algorithm based on trajectories generated by the learner. Specifically, when the expert's state trajectories \(X_{\mathcal T}(\cdot)\) are unavailable, the learner can collect state and control trajectories by applying an arbitrary initial stabilizing behavior policy \(K_\mathcal{L}\). This behavior policy is used only for data collection and is decoupled from the feedback policies updated during iteration. Based on the learner's trajectories, the cost weights are iteratively updated without requiring knowledge of the system dynamics or jump intensity.

\subsection{Model-Free Off-Policy Inverse Policy Iteration Algorithm Design}
~~~~We now introduce the model-free off-policy inverse policy iteration algorithm.

\begin{algorithm}[H]
\caption{\rm Model-Free Off-Policy Inverse Policy Iteration Algorithm} \label{Algorithm 2} 
1: {\bf Initialization}: For a given $R>{\bf 0}$, choose an initial $N^{(0)}>{\bf 0}$. Select an arbitrary stabilizer $K_\mathcal{L}$ and collect the learner's data $X_\mathcal{L}(s)$ by running system \eqref{expert-system} with the behavior policy $u_\mathcal{L}(\cdot)=K_\mathcal{L}X_\mathcal{L}(\cdot)+e_{\mathcal{L}}(\cdot)$, where $e_{\mathcal{L}}(\cdot)$ is a probing noise. Let $i=0$ and $\varepsilon>0$.\\
2: {\bf do} $\{$\\
3: \textbf{Policy Evaluation}: Solve for $P^{(i+1)}$, $\mathbb{B}^{(i+1)}$, and $\mathbb{D}^{(i+1)}$ via the identity
\begin{align} \label{algorithm2-correction}
&~X_\mathcal{L}(t)^{\top}P^{(i+1)}X_\mathcal{L}(t)-\mathbb{E}^{\mathcal{F}_t}\left[{X_\mathcal{L}(t+\Delta t)^{\top}}P^{(i+1)}X_\mathcal{L}(t+\Delta t)\right] \nonumber\\
&+2\mathbb{E}^{\mathcal{F}_t}\int_t^{t+\Delta t}(u_\mathcal{L}(s)-K_\mathcal{T}X_\mathcal{L}(s))^{\top}\mathbb{B}^{(i+1)}X_\mathcal{L}(s)ds \nonumber\\
&+\mathbb{E}^{\mathcal{F}_t}\int_t^{t+\Delta t}{u_\mathcal{L}(s)}^{\top}\mathbb{D}^{(i+1)}u_\mathcal{L}(s)ds \nonumber\\
&-\mathbb{E}^{\mathcal{F}_t}\int_t^{t+\Delta t}{X_\mathcal{L}(s)}^{\top}{K_\mathcal{T}}^{\top}\mathbb{D}^{(i+1)}K_\mathcal{T}X_\mathcal{L}(s)ds \nonumber\\
=&~\mathbb{E}^{\mathcal{F}_t}\int_t^{t+\Delta t}{X_\mathcal{L}(s)}^{\top}(N^{(i)}+{K_\mathcal{T}}^{\top}RK_\mathcal{T})X_\mathcal{L}(s)ds.
\end{align}
4: \textbf{Policy Improvement}: Update $K^{(i+1)}$ by\\
\begin{equation} \label{algorithm2-update}
\begin{aligned}
K^{(i+1)}=&-(R+\mathbb{D}^{(i+1)})^{-1}\mathbb{B}^{(i+1)}. 
\end{aligned}
\end{equation}
5: \textbf{Weight Update}: Update $N^{(i+1)}$ via the equation\\
\begin{align} \label{algorithm2-reconstruction}
\mathbb{E}^{\mathcal{F}_t}\int_t^{t+\Delta t}{X_\mathcal{L}(s)^{\top}}N^{(i+1)}X_\mathcal{L}(s)ds
=&~X_\mathcal{L}(t)^{\top}P^{(i+1)}X_\mathcal{L}(t)-\mathbb{E}^{\mathcal{F}_t}\left[{X_\mathcal{L}(t+\Delta t)^{\top}}P^{(i+1)}X_\mathcal{L}(t+\Delta t)\right] \nonumber\\
&+2\mathbb{E}^{\mathcal{F}_t}\int_t^{t+\Delta t}u_\mathcal{L}(s)^{\top}\mathbb{B}^{(i+1)}X_\mathcal{L}(s)ds \nonumber\\
&+\mathbb{E}^{\mathcal{F}_t}\int_t^{t+\Delta t}{u_\mathcal{L}(s)}^{\top}\mathbb{D}^{(i+1)}u_\mathcal{L}(s)ds \nonumber\\
&+\mathbb{E}^{\mathcal{F}_t}\int_t^{t+\Delta t}{X_\mathcal{L}(s)}^{\top}{K^{(i+1)}}^{\top}(R+\mathbb{D}^{(i+1)})K^{(i+1)}X_\mathcal{L}(s)ds.
\end{align}
6: $i\leftarrow i+1$.\\
7: $\}$ {\bf until} $\|N^{(i+1)}-N^{(i)}\|\leq \varepsilon$. 
\end{algorithm}

Next, we demonstrate the stability and convergence of Algorithm \hyperref[Algorithm 2]{2}.
\begin{theorem} \label{proposition5.1}
{\rm\begin{enumerate} [1)]
\item All the feedback gains $\left\{K^{(i)}\right\}_{i=1}^{\infty}$ updated by \eqref{algorithm2-update} are stabilizers.
\item The sequences $\left\{P^{(i)}\right\}_{i=1}^{\infty}$, $\left\{K^{(i)}\right\}_{i=1}^{\infty}$, and $\left\{N^{(i)}\right\}_{i=1}^{\infty}$ generated by Algorithm \hyperref[Algorithm 2]{2} converge to $P^*$, $K^*$, and $N^*$ defined in \eqref{eq28} and \eqref{eq29}.
\end{enumerate}}
\end{theorem}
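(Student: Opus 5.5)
The plan is to show that Algorithm 2 generates exactly the same iterates as the model-based recursion \eqref{algorithm 1-correction}--\eqref{algorithm 1-reconstruction}. Once that is done, both claims follow from Theorem 3.1 (stability) and Theorem 3.2 (convergence). So the work is an equivalence argument analogous to Propositions 3.1 and 3.2, carried out with the learner's behavior-policy data instead of the expert's.

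\textbf{Step 1 (policy evaluation).} Let $P^{(i+1)}$ solve \eqref{algorithm 1-correction}, and set
\[
\mathbb{B}^{(i+1)}=B^{\top}P^{(i+1)}+D^{\top}P^{(i+1)}C+\int_{\mathcal{E}}F(e)^{\top}P^{(i+1)}E(e)\lambda(de)=Q_{ux}^{(i+1)},
\]
\[
\mathbb{D}^{(i+1)}=D^{\top}P^{(i+1)}D+\int_{\mathcal{E}}F(e)^{\top}P^{(i+1)}F(e)\lambda(de).
\]
\begin{itemize}
\item Apply It\^o's formula to $X_\mathcal{L}^{\top}P^{(i+1)}X_\mathcal{L}$ along the behavior dynamics with $u_\mathcal{L}$, exactly as in \eqref{eq16}--\eqref{eq16-b}.
\item Write $u_\mathcal{L}=K_\mathcal{T}X_\mathcal{L}+(u_\mathcal{L}-K_\mathcal{T}X_\mathcal{L})$ and expand the drift around the closed-loop matrix with gain $K_\mathcal{T}$.
\item The drift then equals the left side of \eqref{algorithm 1-correction} evaluated at $X_\mathcal{L}$, plus the cross term $2(u_\mathcal{L}-K_\mathcal{T}X_\mathcal{L})^{\top}\mathbb{B}^{(i+1)}X_\mathcal{L}$, plus $u_\mathcal{L}^{\top}\mathbb{D}^{(i+1)}u_\mathcal{L}-X_\mathcal{L}^{\top}K_\mathcal{T}^{\top}\mathbb{D}^{(i+1)}K_\mathcal{T}X_\mathcal{L}$.
\item The martingale terms in $dW$ and $\widetilde N$ vanish under $\mathbb{E}^{\mathcal{F}_t}$. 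Using \eqref{algorithm 1-correction} then gives \eqref{algorithm2-correction}.
\end{itemize}

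\textbf{Step 2 (uniqueness of the evaluation step).} Conversely, any triple solving \eqref{algorithm2-correction} must coincide with this one. Vectorize \eqref{algorithm2-correction} over $l$ intervals, as in Section 3.2, and impose a rank condition analogous to Assumption 3.1 on the learner data matrix. The probing noise $e_\mathcal{L}$ is what makes this condition attainable. Under it the linear system has a unique solution, which must be the model-based triple above.

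\textbf{Step 3 (improvement and weight update).} With these identifications, $R+\mathbb{D}^{(i+1)}=Q_{uu}^{(i+1)}$, so \eqref{algorithm2-update} is exactly \eqref{algorithm3-update}.

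For the weight update, apply It\^o's formula again along $u_\mathcal{L}$ without the $K_\mathcal{T}$ splitting. The right side of \eqref{algorithm2-reconstruction} then becomes $\mathbb{E}^{\mathcal{F}_t}\int X_\mathcal{L}^{\top}MX_\mathcal{L}\,ds$, where
\[
M=-P^{(i+1)}A-A^{\top}P^{(i+1)}-C^{\top}P^{(i+1)}C-\int_{\mathcal{E}} E^{\top}P^{(i+1)}E\,\lambda(de)+{K^{(i+1)}}^{\top}Q_{uu}^{(i+1)}K^{(i+1)}.
\]
Checking this requires tracking the signs of the $u$-dependent terms from the generator; I expect them to cancel against the $\mathbb{B}$ and $\mathbb{D}$ integrals. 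Matching this with the left side of \eqref{algorithm2-reconstruction} yields \eqref{algorithm 1-reconstruction}, again using the rank condition (now on the $\frac{n(n+1)}{2}$ quadratic state monomials) to identify $N^{(i+1)}=M$ uniquely. By Proposition 3.2 this is the same as \eqref{algorithm3-reconstruction}.

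\textbf{Step 4 (conclusion).} The sequences $\{P^{(i)}\},\{K^{(i)}\},\{N^{(i)}\}$ from Algorithm 2 coincide with those of Algorithm 1. Part 1) then follows from Theorem 3.1, which uses only $N^{(0)}>\mathbf{0}$ and that $K_\mathcal{T}$ is a stabilizer. Part 2) follows from Theorem 3.2, under the same hypothesis $\mathbf{0}<N^{(0)}<N$, which I would state explicitly.

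\textbf{Main obstacle.} The main obstacle is Step 2 and its analogue in Step 3: justifying uniqueness of the data-driven solution. This needs a PE/rank assumption on the vectorized learner data, namely the regressors built from $X_\mathcal{L}\otimes X_\mathcal{L}$, $X_\mathcal{L}\otimes(u_\mathcal{L}-K_\mathcal{T}X_\mathcal{L})$ and the $u$-quadratic terms. The unknown $\mathbb{D}^{(i+1)}$ appears twice, and its coefficient is $u_\mathcal{L}\otimes u_\mathcal{L}-K_\mathcal{T}X_\mathcal{L}\otimes K_\mathcal{T}X_\mathcal{L}$. For generic probing noise $e_\mathcal{L}$ this is not in the span of the other regressors, and this is the point I would check first.
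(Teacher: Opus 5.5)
Your proposal follows the paper's proof. The paper likewise applies It\^o's formula along the behavior data to show that \eqref{algorithm2-correction}, \eqref{algorithm2-update} and \eqref{algorithm2-reconstruction} reproduce the model-based recursion \eqref{algorithm 1-correction}, \eqref{algorithm3-update}, \eqref{algorithm 1-reconstruction}, and then inherits stability and convergence from Theorems~\ref{theorem5.1} and~\ref{theorem4.2}; your remark that the second claim tacitly needs $\mathbf{0}<N^{(0)}<N$ is correct. The only difference is the converse direction: the paper divides by $\Delta t$ and lets $x_\tau$ range over $\mathbb{R}^n$, tacitly assuming $\mathbb{B}^{(i+1)},\mathbb{D}^{(i+1)}$ already have their model-based form, whereas your rank-condition argument is what the paper's Lemma~\ref{lemma3.1} supplies, and that lemma also settles your flagged obstacle: full rank of $[\mathrm{II}_{xx},\mathrm{II}_{xu},\mathrm{III}_{\bar{u}}]$, together with It\^o's formula for $X_\mathcal{L}^{\top}M_1X_\mathcal{L}$ and uniqueness for the Lyapunov equation of the stabilizer $K_\mathcal{T}$, forces $\varPhi_{p}$ to have full column rank.
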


\begin{proof}
First, we prove that solving for $P^{(i+1)}$ by \eqref{algorithm2-correction} is equivalent to solving \eqref{algorithm 1-correction}. Applying $\mathrm{It\hat{o}}$'s formula to $X_\mathcal{L}(s)^{\top}P^{(i+1)}X_\mathcal{L}(s)$. 
\begin{align} 
&~d[X_\mathcal{L}(s)^{\top}P^{(i+1)}X_\mathcal{L}(s)] \nonumber\\
=&~\left\{X_\mathcal{L}(s)^{\top}\left(P^{(i+1)}A+A^{\top}P^{(i+1)}+C^{\top}P^{(i+1)}C+\int_{\mathcal{E}}{E(e)}^{\top}P^{(i+1)}{E(e)}\lambda(de)\right)X_\mathcal{L}(s)\right.\nonumber \\
&+2u_\mathcal{L}(s)^{\top}\left(B^{\top}P^{(i+1)}+D^{\top}P^{(i+1)}C+\int_{\mathcal{E}}{F(e)}^{\top}P^{(i+1)}{E(e)}\lambda(de)\right)X_\mathcal{L}(s) \nonumber \\
&\left.+u_\mathcal{L}(s)^{\top}\left(D^{\top}P^{(i+1)}D+\int_{\mathcal{E}}{F(e)}^{\top}P^{(i+1)}{F(e)}\lambda(de)\right)u_\mathcal{L}(s)\right\}ds \nonumber \\
&+\left\{\dots\right\}dW(s)+\left\{\dots\right\}\widetilde{N}(ds,de). \label{eq18-a}
\end{align}
Integrating \eqref{eq18-a} from $t$ to $t+\Delta t$ and then taking the conditional expectation $\mathbb{E}^{\mathcal{F}_t}$, we obtain
\begin{align}
&~\mathbb{E}^{\mathcal{F}_t}\left[{X_\mathcal{L}(t+\Delta t)^{\top}}P^{(i+1)}X_\mathcal{L}(t+\Delta t)\right]-X_\mathcal{L}(t)^{\top}P^{(i+1)}X_\mathcal{L}(t) \nonumber\\
=&~\mathbb{E}^{\mathcal{F}_t}\int_t^{t+\Delta t}X_\mathcal{L}(s)^{\top}\left[P^{(i+1)}A+A^{\top}P^{(i+1)}+C^{\top}P^{(i+1)}C+\int_{\mathcal{E}}{E(e)}^{\top}P^{(i+1)}{E(e)}\lambda(de)\right]X_\mathcal{L}(s)ds \nonumber\\
&+2\mathbb{E}^{\mathcal{F}_t}\int_t^{t+\Delta t}u_\mathcal{L}(s)^{\top}\mathbb{B}^{(i+1)}X_\mathcal{L}(s)ds+\mathbb{E}^{\mathcal{F}_t}\int_t^{t+\Delta t}u_\mathcal{L}(s)^{\top}\mathbb{D}^{(i+1)}u_\mathcal{L}(s)ds, \label{eq20-a}
\end{align}
where 
\begin{align*}
\mathbb{B}^{(i+1)}=&B^{\top}P^{(i+1)}+D^{\top}P^{(i+1)}C +\int_{\mathcal{E}}{F(e)}^{\top}P^{(i+1)}{E(e)}\lambda(de),\\
\mathbb{D}^{(i+1)}=&D^{\top}P^{(i+1)}D+\int_{\mathcal{E}}{F(e)}^{\top}P^{(i+1)}{F(e)}\lambda(de).
\end{align*}
Substituting the equality \eqref{algorithm 1-correction} into \eqref{eq20-a}, it follows that
\begin{align*}
&\mathbb{E}^{\mathcal{F}_t}\left[{X_\mathcal{L}(t+\Delta t)^{\top}}P^{(i+1)}X_\mathcal{L}(t+\Delta t)\right]-X_\mathcal{L}(t)^{\top}P^{(i+1)}X_\mathcal{L}(t) \\
=&-\mathbb{E}^{\mathcal{F}_t}\int_t^{t+\Delta t}X_\mathcal{L}(s)^{\top}(N^{(i)}+{K_\mathcal{T}}^{\top}RK_\mathcal{T})X_\mathcal{L}(s)ds \\
&+2\mathbb{E}^{\mathcal{F}_t}\int_t^{t+\Delta t}\left(u_\mathcal{L}(s)-K_\mathcal{T}X_\mathcal{L}(s)\right)^{\top}\mathbb{B}^{(i+1)}X_\mathcal{L}(s)ds \\
&+\mathbb{E}^{\mathcal{F}_t}\int_t^{t+\Delta t}u_\mathcal{L}(s)^{\top}\mathbb{D}^{(i+1)}u_\mathcal{L}(s)ds-\mathbb{E}^{\mathcal{F}_t}\int_t^{t+\Delta t}X_\mathcal{L}(s)^{\top}K^{\top}_\mathcal{T}\mathbb{D}^{(i+1)}K_\mathcal{T}X_\mathcal{L}(s)ds,
\end{align*}
which confirms \eqref{algorithm2-correction}.

Conversely, if $P^{(i+1)} \in \mathbb{S}^{n}$ is the solution of \eqref{algorithm2-correction}, for any $\tau >t$, a calculation similar to \eqref{eq20-a} yields
\begin{equation*} 
\begin{aligned}
\mathbb{E}^{\mathcal{F}_{\tau}}&\int_{\tau}^{\tau +\Delta t}{X_\mathcal{L}(s)^{\top}}\left\{P^{(i+1)}A+A^{\top}P^{(i+1)}+C^{\top}P^{(i+1)}C+\int_{\mathcal{E}}E(e)^{\top}P^{(i+1)}E(e)\lambda(de)\right\}X_\mathcal{L}(s)ds\\
&+2\mathbb{E}^{\mathcal{F}_{\tau}}\int_{\tau}^{\tau +\Delta t}{X_\mathcal{L}(s)^{\top}}{K_\mathcal{T}}^{\top}\mathbb{B}^{(i+1)}X_\mathcal{L}(s)ds +\mathbb{E}^{\mathcal{F}_{\tau}}\int_{\tau}^{\tau +\Delta t}{X_\mathcal{L}(s)}^{\top}K^{\top}_\mathcal{T}\mathbb{D}^{(i+1)}K_\mathcal{T}X_\mathcal{L}(s)ds\\
& +\mathbb{E}^{\mathcal{F}_{\tau}}\int_{\tau}^{\tau +\Delta t}{X_\mathcal{L}(s)}^{\top}\left\{N^{(i)}+{K_\mathcal{T}}^{\top}RK_\mathcal{T}\right\}X_\mathcal{L}(s)ds=0.
\end{aligned}
\end{equation*}
Dividing both sides of the above equation by $\Delta t$ and taking the limit as $\Delta t \to 0$, we deduce that 
\begin{equation*}
\begin{aligned}
x_{\tau}^{\top}&\biggl\{ P^{(i+1)}(A+BK_\mathcal{T})+(A+BK_\mathcal{T})^{\top}P^{(i+1)}+(C+DK_\mathcal{T})^{\top}P^{(i+1)}(C+DK_\mathcal{T})\\
&+\int_{\mathcal{E}}(E(e)+F(e)K_\mathcal{T})^{\top}P^{(i+1)}(E(e)+F(e)K_\mathcal{T})\lambda(de)+N^{(i)}+{K_\mathcal{T}}^{\top}RK_\mathcal{T} \biggr\}x_{\tau}=0,
\end{aligned}
\end{equation*}
where $x_{\tau}$ denotes the learner's state at time $\tau$. Since $x_{\tau}$ can take any value in $\mathbb{R}^{n}$, we have \eqref{algorithm 1-correction}.

Next, we show that solving \eqref{algorithm2-reconstruction} for $N^{(i+1)}$ yields the same result as solving \eqref{algorithm 1-reconstruction}. Multiplying both sides of \eqref{algorithm 1-reconstruction} by nonzero $X_\mathcal{L}(s)$, \eqref{algorithm 1-reconstruction} can be rewritten as
\begin{align}
&{X_\mathcal{L}(s)}^{\top}N^{(i+1)}X_\mathcal{L}(s) \nonumber\\
=&-{X_\mathcal{L}(s)}^{\top}\left(P^{(i+1)}A+A^{\top}P^{(i+1)}+C^{\top}P^{(i+1)}C+\int_{\mathcal{E}}{E(e)}^{\top}P^{(i+1)}{E(e)}\lambda(de)\right)X_\mathcal{L}(s) \nonumber\\
&+{X_\mathcal{L}(s)}^{\top}{K^{(i+1)}}^{\top}\left(R+D^{\top}P^{(i+1)}D+\int_{\mathcal{E}}F(e)^{\top}P^{(i+1)}F(e)\lambda(de)\right)K^{(i+1)}X_\mathcal{L}(s). \label{eq19-a}
\end{align}
By a calculation similar to that in \eqref{eq20-a}, we have
\begin{align}
&\mathbb{E}^{\mathcal{F}_t}\int_t^{t+\Delta t}{X_\mathcal{L}(s)}^{\top}N^{(i+1)}X_\mathcal{L}(s)ds \nonumber\\
=&-\mathbb{E}^{\mathcal{F}_t}\int_t^{t+\Delta t}{X_\mathcal{L}(s)}^{\top}\left[P^{(i+1)}A+A^{\top}P^{(i+1)}+C^{\top}P^{(i+1)}C+\int_{\mathcal{E}}{E(e)}^{\top}P^{(i+1)}{E(e)}\lambda(de)\right]X_\mathcal{L}(s)ds \nonumber\\
&+\mathbb{E}^{\mathcal{F}_t}\int_t^{t+\Delta t}{X_\mathcal{L}(s)}^{\top}{K^{(i+1)}}^{\top}\left(R+\mathbb{D}^{(i+1)}\right)K^{(i+1)}X_\mathcal{L}(s)ds. \label{eq21-a}
\end{align} 
Combining \eqref{eq20-a} and \eqref{eq21-a} yields \eqref{algorithm2-reconstruction}. Similarly, if $P^{(i+1)}\in \mathbb{S}^{n}$ is the solution of \eqref{algorithm2-reconstruction}, it is readily verified that \eqref{algorithm 1-reconstruction} holds. 

Thus, the iterations \cref{algorithm2-correction,algorithm2-reconstruction} are equivalent to \eqref{algorithm 1-correction} and \eqref{algorithm 1-reconstruction}, respectively. Consequently, Algorithm \hyperref[Algorithm 2]{2} is equivalent to Algorithm \hyperref[Algorithm 3]{1} and therefore has the same theoretical properties. This concludes the proof.
\end{proof}

\subsection{Implementation of Algorithm 2}
 By \cite{Li-Li,murray2002adaptive}, there exists a matrix $\mathcal{M} \in \mathbb{R}^{{{n^2} \times \frac{1}{2}}n(n+1)}$ with $rank(\mathcal{M})=\frac{n(n+1)}{2}$ such that $vec(P)=\mathcal{M}vec^{+}(P)$ for any $P\in \mathbb{S}^n$. We define $\overline{L}:=\left(L^{\top}\otimes L^{\top}\right) \mathcal{M} \in \mathbb{R}^{{{n^2} \times \frac{1}{2}}n(n+1)}$ such that $L^{\top}\otimes L^{\top}vec(P)={\overline{L}}vec^{+}(P)$ with $L\in \mathbb{R}^{n}$. In particular, for any vector $x = \left[x_1, x_2, \cdots, x_n \right]^{\top} \in \mathbb{R}^{n}$, we have
\[
\begin{aligned}
\bar{x}=&[x_{1}^2, x_{1}x_{2},\dots ,x_{1}x_{n},x^2_{2},x_{2}x_{3},\dots ,x_{n-1}x_{n},x_{n}^2]^{\top},
\end{aligned}
\]
where $\bar{x} \in \mathbb{R}^{{\frac{1}{2}}n(n+1)}$.

In \eqref{algorithm2-correction}, noting that $K_\mathcal{T}X_\mathcal{L}(s)$ and $K^{(i+1)}X_\mathcal{L}(s)$ are two column vectors, one gets
\[
\begin{aligned}
    {X_\mathcal{L}(s)}^{\top}{K_\mathcal{T}}^{\top}\mathbb{D}^{(i+1)}K_\mathcal{T}X_\mathcal{L}(s)=& {\overline{K_\mathcal{T}X_\mathcal{L}(s)}}^{\top}vec^{+}(\mathbb{D}^{(i+1)}),\\
    {X_\mathcal{L}(s)}^{\top}{K^{(i+1)}}^{\top}(R+\mathbb{D}^{(i+1)})K^{(i+1)}X_\mathcal{L}(s)=& {\overline{K^{(i+1)}X_\mathcal{L}(s)}}^{\top}vec^{+}(R+\mathbb{D}^{(i+1)}).
\end{aligned}
\]
By the Kronecker product theory, \eqref{algorithm2-correction} can be rewritten as
\begin{align*}
    &\mathrm{I}\times vec^{+}(P^{(i+1)}) + 2 \mathrm{II}\times vec(\mathbb{B}^{(i+1)}) +\mathrm{III} \times vec^{+}(\mathbb{D}^{(i+1)}) \nonumber \\
    =~& \mathrm{IV} \times vec(N^{(i)}+{K_\mathcal{T}}^{\top}RK_\mathcal{T}),
\end{align*}
where 
\begin{align*}
    \mathrm{I} &=~ \begin{bmatrix}
        \mathbb{E}^{\mathcal{F}_t}\left[\bar{X}_{\mathcal{L}}(t)^{\top}-\bar{X}_{\mathcal{L}}(t+\Delta t)^{\top}\right] \\
        \vdots\\
        \mathbb{E}^{\mathcal{F}_t}\left[\bar{X}_{\mathcal{L}}(t+(l-1)\Delta t)^{\top}-\bar{X}_{\mathcal{L}}(t+l\Delta t)^{\top}\right] \\
    \end{bmatrix}, \\
    \mathrm{II} &=~\begin{bmatrix}
        \mathbb{E}^{\mathcal{F}_t}\left[\int_t^{t+\Delta t}X_\mathcal{L}(s)^{\top}\otimes u_\mathcal{L}(s)^{\top}ds\right] \\
        \vdots\\
        \mathbb{E}^{\mathcal{F}_t}\left[\int_{t+(l-1)\Delta t}^{t+l\Delta t}X_\mathcal{L}(s)^{\top}\otimes u_\mathcal{L}(s)^{\top}ds\right] \\
    \end{bmatrix} - \begin{bmatrix}
        \mathbb{E}^{\mathcal{F}_t}\left[\int_t^{t+\Delta t}X_\mathcal{L}(s)^{\top}\otimes X_\mathcal{L}(s)^{\top}ds\right] \\
        \vdots\\
        \mathbb{E}^{\mathcal{F}_t}\left[\int_{t+(l-1)\Delta t}^{t+l\Delta t}X_\mathcal{L}(s)^{\top}\otimes X_\mathcal{L}(s)^{\top}ds\right] \\
    \end{bmatrix}\times \left(I_n\otimes K_\mathcal{T}^{\top}\right) \\
    &\coloneqq~\mathrm{II}_{xu}-\mathrm{II}_{xx}\times(I_{n}\otimes K_\mathcal{T}^{\top}), \\
    \mathrm{III} &=~ \begin{bmatrix}
        \mathbb{E}^{\mathcal{F}_t}\left[\int_t^{t+\Delta t}\bar{u}_{\mathcal{L}}(s)^{\top}ds\right] \\
        \vdots \\
        \mathbb{E}^{\mathcal{F}_t}\left[\int_{t+(l-1)\Delta t}^{t+l\Delta t}\bar{u}_{\mathcal{L}}(s)^{\top}ds\right]
    \end{bmatrix} - \begin{bmatrix}
        \mathbb{E}^{\mathcal{F}_t}\left[\int_t^{t+\Delta t}{\overline{K_\mathcal{T}X_\mathcal{L}(s)}}^{\top}ds\right] \\
        \vdots \\
        \mathbb{E}^{\mathcal{F}_t}\left[\int_{t+(l-1)\Delta t}^{t+l\Delta t}{\overline{K_\mathcal{T}X_\mathcal{L}(s)}}^{\top}ds\right]
    \end{bmatrix}\coloneqq ~\mathrm{III}_{\bar{u}}-\mathrm{III}_{\overline{K_\mathcal{T}X}}, \\
    \mathrm{IV} &=~ \begin{bmatrix}
        \mathbb{E}^{\mathcal{F}_t}\left[\int_t^{t+\Delta t}X_\mathcal{L}(s)^{\top}\otimes X_\mathcal{L}(s)^{\top}ds\right] \\
        \vdots\\
        \mathbb{E}^{\mathcal{F}_t}\left[\int_{t+(l-1)\Delta t}^{t+l\Delta t}X_\mathcal{L}(s)^{\top}\otimes X_\mathcal{L}(s)^{\top}ds\right] \\
    \end{bmatrix},
\end{align*}
where $l$ denotes the total number of data groups.
Using the above operators, we construct the system matrix $\varPhi_{p}$ and the iterative vector $\varPsi_{p}^{(i)}$ satisfying \eqref{algorithm2-correction} as follows:
\[
\begin{aligned}
\varPhi_{p}\coloneqq &~\begin{bmatrix}
\mathrm{I}, \mathrm{II}, \mathrm{III}
\end{bmatrix},\\
\varPsi_{p}^{(i)}\coloneqq &~\mathrm{IV} \times vec(N^{(i)}+{K_\mathcal{T}}^{\top}RK_\mathcal{T}).
\end{aligned}
\]
Then, \eqref{algorithm2-correction} becomes
\[
\begin{aligned}
\varPhi_{p}{
\begin{bmatrix}
vec^{+}(P^{(i+1)})\\
vec(\mathbb{B}^{(i+1)})\\
vec^{+}(\mathbb{D}^{(i+1)})
\end{bmatrix}
}=\varPsi_{p}^{(i)}.
\end{aligned}
\]

If $\varPhi_{p}$ has full column rank, $P^{(i+1)}$, $\mathbb{B}^{(i+1)}$, and $\mathbb{D}^{(i+1)}$ can be uniquely determined by the batch least squares method, and the above equation becomes
\begin{equation} \label{eq23-a}
\begin{aligned}
\begin{bmatrix}
vec^{+}(P^{(i+1)})\\
vec(\mathbb{B}^{(i+1)})\\
vec^{+}(\mathbb{D}^{(i+1)})
\end{bmatrix}
=(\varPhi_{p}^{\top}\varPhi_{p})^{-1}\varPhi_{p}^{\top}\varPsi_{p}^{(i)}.
\end{aligned} 
\end{equation}

Similarly, to update $N^{(i+1)}$ from \eqref{algorithm2-reconstruction}, we define the following notation.
\begin{align*}
    &\mathrm{V}\times vec^{+}(N^{(i+1)})  \nonumber \\
    =~& \mathrm{I} \times vec^{+}(P^{(i+1)}) + 2\mathrm{II}_{xu}\times vec(\mathbb{B}^{(i+1)}) \nonumber \\
    &+\mathrm{III}_{\bar{u}} \times vec^{+}(\mathbb{D}^{(i+1)}) +\mathrm{VI}\times vec^{+}(R+\mathbb{D}^{(i+1)}),
\end{align*}
where
\begin{align*}
    \mathrm{V}=~&\begin{bmatrix}
        \mathbb{E}^{\mathcal{F}_t}\left[\int_t^{t+\Delta t}\bar{X}_{\mathcal{L}}(s)^{\top}ds\right] \\
        \vdots \\
        \mathbb{E}^{\mathcal{F}_t}\left[\int_{t+(l-1)\Delta t}^{t+l\Delta t}\bar{X}_{\mathcal{L}}(s)^{\top}ds\right]
    \end{bmatrix}, \\
    \mathrm{VI}=~&\begin{bmatrix}
        \mathbb{E}^{\mathcal{F}_t}\left[\int_t^{t+\Delta t}{\overline{K^{(i+1)}X_\mathcal{L}(s)}}^{\top}ds\right] \\
        \vdots \\
        \mathbb{E}^{\mathcal{F}_t}\left[\int_{t+(l-1)\Delta t}^{t+l\Delta t}{\overline{K^{(i+1)}X_\mathcal{L}(s)}}^{\top}ds\right]
    \end{bmatrix}.
\end{align*}
We denote
\[
\begin{aligned}
\varPsi_{q}^{(i+1)}\coloneqq &~\mathrm{I} \times vec^{+}(P^{(i+1)}) + 2\mathrm{II}_{xu}\times vec(\mathbb{B}^{(i+1)}) \nonumber \\
    &+\mathrm{III}_{\bar{u}} \times vec^{+}(\mathbb{D}^{(i+1)}) +\mathrm{VI}\times vec^{+}(R+\mathbb{D}^{(i+1)}).
\end{aligned}
\]
If $\mathrm{V}$ has full column rank, $N^{(i+1)}$ can be uniquely calculated as
\begin{equation} \label{eq24-a}
vec^{+}(N^{(i+1)})=({\mathrm{V}^{\top}}\mathrm{V})^{-1}{\mathrm{V}^{\top}}\varPsi_{q}^{(i+1)}.
\end{equation}

The following lemma establishes rank conditions that guarantee \eqref{eq23-a} and \eqref{eq24-a} have unique solutions.
\begin{lemma} \label{lemma3.1}
{\rm If there exists $l_0\in \mathbb{Z}^{+}$ such that, for all $l\geq l_0$,}
\begin{align}
rank(\left[\mathrm{II}_{xx},\mathrm{II}_{xu},\mathrm{III}_{\bar{u}}\right])=&~\frac{n(n+1)}{2}+mn+\frac{m(m+1)}{2}, \label{eq25a}\\
rank(\mathrm{V})=&~\frac{n(n+1)}{2}, \label{eq25b}
\end{align}
{\rm then \eqref{eq23-a} and \eqref{eq24-a} each admit a unique solution.}
\end{lemma}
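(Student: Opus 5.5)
We will show that the rank conditions \eqref{eq25a} and \eqref{eq25b} imply that $\varPhi_p$ and $\mathrm{V}$ both have full column rank. Then $\varPhi_p^{\top}\varPhi_p$ and $\mathrm{V}^{\top}\mathrm{V}$ are invertible, and \eqref{eq23-a} and \eqref{eq24-a} each have a unique solution.

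The statement for $\mathrm{V}$ is immediate. Its number of columns is $\frac{n(n+1)}{2}$, so \eqref{eq25b} says exactly that $\mathrm{V}$ has full column rank for $l\ge l_0$.

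For $\varPhi_p$ we argue by contradiction. Suppose $\varPhi_p\,[vec^{+}(Y);\,vec(Z);\,vec^{+}(W)]=\mathbf{0}$ for some $Y\in\mathbb{S}^n$, $Z\in\mathbb{R}^{m\times n}$ and $W\in\mathbb{S}^m$. We want to show that $Y$, $Z$ and $W$ all vanish.

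The first step is to rewrite the block $\mathrm{I}\,vec^{+}(Y)$. Applying It\^o's formula to $X_\mathcal{L}^{\top}YX_\mathcal{L}$, exactly as in \eqref{eq20-a} but with $P^{(i+1)}$ replaced by $Y$, each row of $\mathrm{I}\,vec^{+}(Y)$ becomes
\[
-\mathbb{E}^{\mathcal{F}_t}\int\Big[X_\mathcal{L}^{\top}H(Y)X_\mathcal{L}+2u_\mathcal{L}^{\top}\mathbb{B}_YX_\mathcal{L}+u_\mathcal{L}^{\top}\mathbb{D}_Yu_\mathcal{L}\Big]ds .
\]
Here we use the abbreviations
\[
H(Y)=YA+A^{\top}Y+C^{\top}YC+\int_{\mathcal{E}}E(e)^{\top}YE(e)\lambda(de),
\]
\[
\mathbb{B}_Y=B^{\top}Y+D^{\top}YC+\int_{\mathcal{E}}F(e)^{\top}YE(e)\lambda(de),\qquad
\mathbb{D}_Y=D^{\top}YD+\int_{\mathcal{E}}F(e)^{\top}YF(e)\lambda(de).
\]

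The second step is to add the contributions of $2\,\mathrm{II}\,vec(Z)$ and $\mathrm{III}\,vec^{+}(W)$ and sort the integrand into $xx$, $ux$ and $uu$ parts. Each row of $\varPhi_p[\cdot]$ then has the form
\[
\mathbb{E}^{\mathcal{F}_t}\int\Big[X_\mathcal{L}^{\top}M_1X_\mathcal{L}+2u_\mathcal{L}^{\top}(Z-\mathbb{B}_Y)X_\mathcal{L}+u_\mathcal{L}^{\top}(W-\mathbb{D}_Y)u_\mathcal{L}\Big]ds=0,
\]
where
\[
M_1=-H(Y)-K_\mathcal{T}^{\top}Z-Z^{\top}K_\mathcal{T}-K_\mathcal{T}^{\top}WK_\mathcal{T}\in\mathbb{S}^n .
\]
In matrix form this reads
\[
\mathrm{II}_{xx}\,vec(M_1)+2\,\mathrm{II}_{xu}\,vec(Z-\mathbb{B}_Y)+\mathrm{III}_{\bar u}\,vec^{+}(W-\mathbb{D}_Y)=\mathbf{0}.
\]

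The third step, which we expect to be the main technical point, is to deduce from \eqref{eq25a} that all three coefficient matrices vanish. The difficulty is bookkeeping: $\mathrm{II}_{xx}$ has $n^2$ columns, while $M_1$ is symmetric and has only $\frac{n(n+1)}{2}$ free entries. We handle this by writing $vec(M_1)=\mathcal{M}\,vec^{+}(M_1)$ and using $\mathrm{II}_{xx}\mathcal{M}$. These two column spaces coincide because $\mathcal{M}$ has full rank on the symmetric parametrization. We therefore read \eqref{eq25a} as stating that the stacked matrix $[\mathrm{II}_{xx}\mathcal{M},\ \mathrm{II}_{xu},\ \mathrm{III}_{\bar u}]$ has full column rank. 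This interpretation is the one that matches the stated count $\frac{n(n+1)}{2}+mn+\frac{m(m+1)}{2}$. It gives
\[
M_1=\mathbf{0},\qquad Z=\mathbb{B}_Y,\qquad W=\mathbb{D}_Y .
\]

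The final step is to substitute these relations into $M_1=\mathbf{0}$. This yields
\[
H(Y)+K_\mathcal{T}^{\top}\mathbb{B}_Y+\mathbb{B}_Y^{\top}K_\mathcal{T}+K_\mathcal{T}^{\top}\mathbb{D}_YK_\mathcal{T}=\mathbf{0}.
\]
This is precisely the Lyapunov equation of Lemma \ref{lemma 2.3} for the closed-loop coefficients $A+BK_\mathcal{T}$, $C+DK_\mathcal{T}$ and $E(e)+F(e)K_\mathcal{T}$, with $\Xi=\mathbf{0}$. Since $K_\mathcal{T}$ is a stabilizer, Lemma \ref{lemma 2.3} gives a unique solution, so $Y=\mathbf{0}$. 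Consequently $Z=\mathbb{B}_Y=\mathbf{0}$ and $W=\mathbb{D}_Y=\mathbf{0}$. Hence $\varPhi_p$ has full column rank, and together with the statement for $\mathrm{V}$ this proves the lemma.
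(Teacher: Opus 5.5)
Your proof is correct and is essentially the paper's argument. You expand $\varPhi_p[\cdot]$ via It\^o's formula into $xx$, $ux$, $uu$ quadratic terms, use \eqref{eq25a} to kill all three coefficient matrices (your $\mathrm{II}_{xx}\mathcal{M}$ is exactly the paper's $\mathrm{V}$), and conclude from the homogeneous closed-loop Lyapunov equation; citing the uniqueness part of Lemma~\ref{lemma 2.3} there is a slightly cleaner substitute for the paper's It\^o-plus-$\tau\to\infty$ argument. One small fix: the column spaces of $\mathrm{II}_{xx}$ and $\mathrm{II}_{xx}\mathcal{M}$ coincide not merely because $\mathcal{M}$ has full rank, but because $\mathrm{II}_{xx}vec(Q)=\mathrm{II}_{xx}vec\big(\tfrac{1}{2}(Q+Q^{\top})\big)$ for every $Q\in\mathbb{R}^{n\times n}$, so $\mathrm{II}_{xx}$ only sees symmetric matrices.
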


\begin{proof}
First, we show that the solution to \eqref{algorithm2-correction} is unique. This amounts to proving that 
\begin{equation} \label{eqA1}
\begin{aligned}
\varPhi_{p}M &= 0 
\end{aligned}
\end{equation}
admits only the trivial solution $M=\mathbf{0}$. 

To this end, we prove by contradiction. Assume that $M=\left[{vec^+(M_1)}^{\top}, {vec(M_{2})}^{\top}, {vec^+(M_3)}^{\top}\right]^{\top}$ $\in \mathbb{R}^{\frac{n(n+1)}{2}+mn+\frac{m(m+1)}{2}}$ is a nonzero solution of \eqref{eqA1}, where ${vec^{+}(M_1)}\in \mathbb{R}^{\frac{n(n+1)}{2}}$, ${vec(M_{2})} \in \mathbb{R}^{mn}$ and ${vec^+(M_3)} \in \mathbb{R}^{\frac{m(m+1)}{2}}$. Applying $\mathrm{It\hat{o}}$'s formula to ${X_\mathcal{L}(s)}^{\top}M_{1}X_\mathcal{L}(s)$ and then taking the conditional expectation $\mathbb{E}^{\mathcal{F}_t}$ yields
\begin{align}
&\mathbb{E}^{\mathcal{F}_t}[{X_\mathcal{L}(t+\Delta t)^{\top}}M_{1}X_\mathcal{L}(t+\Delta t)-X_\mathcal{L}(t)^{\top}M_{1}X_\mathcal{L}(t)] \nonumber\\
=&~\mathbb{E}^{\mathcal{F}_t}\int_t^{t+\Delta t}{X_\mathcal{L}(s)}^{\top} \left(M_{1}(A+BK_\mathcal{T})+(A+BK_\mathcal{T})^{\top}M_{1}+(C+DK_\mathcal{T})^{\top}M_{1}(C+DK_\mathcal{T}) \right.\nonumber \\
&\left. +\int_{\mathcal{E}}(E(e)+F(e)K_\mathcal{T})^{\top}M_{1}(E(e)+F(e)K_\mathcal{T})\lambda(de)\right) X_\mathcal{L}(s)ds \nonumber\\
&+2\mathbb{E}^{\mathcal{F}_t}\int_t^{t+\Delta t}(u_\mathcal{L}(s)-K_\mathcal{T}X_\mathcal{L}(s))^{\top}\left[B^{\top}M_{1}+D^{\top}M_{1}\left(C+DK_\mathcal{T}\right) \right. \nonumber \\
&\left. +\int_{\mathcal{E}}F(e)^{\top}M_{1}\left(E(e)+F(e)K_\mathcal{T}\right)\lambda(de)\right]X_\mathcal{L}(s)ds \nonumber\\
&+\mathbb{E}^{\mathcal{F}_t}\int_t^{t+\Delta t}\left(u_\mathcal{L}(s)-K_\mathcal{T}X_\mathcal{L}(s)\right)^{\top}\left(D^{\top}M_{1}D+\int_{\mathcal{E}}F(e)^{\top}M_{1}F(e)\lambda(de)\right)\left(u_\mathcal{L}(s)-K_\mathcal{T}X_\mathcal{L}(s)\right)ds. \label{eqA3}
\end{align}

Combining equation \eqref{algorithm2-correction} and \eqref{eqA3} with the definition of $\Phi_{p}$, we obtain the following equation
\begin{equation*}
\begin{aligned}
\Phi_{p}M = \mathrm{V}\times vec^{+}(\mathcal{Y}_1) + \mathrm{II}_{xu}\times vec(\mathcal{Y}_2) + \mathrm{III}_{\bar{u}}\times vec^{+}(\mathcal{Y}_3),
\end{aligned}
\end{equation*}
where 
\begin{align}
\mathcal{Y}_1 =&-M_{1}(A+BK_\mathcal{T})-(A+BK_\mathcal{T})^{\top}M_{1}-(C+DK_\mathcal{T})^{\top}M_{1}(C+DK_\mathcal{T}) \nonumber\\
&-\int_{\mathcal{E}}(E(e)+F(e)K_\mathcal{T})^{\top}M_{1}(E(e)+F(e)K_\mathcal{T})\lambda(de) \nonumber\\
&+2{K_\mathcal{T}}^{\top}\left(B^{\top}M_{1}+D^{\top}M_{1}\left(C+DK_\mathcal{T}\right)+\int_{\mathcal{E}}F(e)^{\top}M_{1}\left(E(e)+F(e)K_\mathcal{T}\right)\lambda(de)\right) \nonumber\\
&-2K_\mathcal{T}^{\top}M_{2}-{K_\mathcal{T}}^{\top}M_{3}K_\mathcal{T}-{K_\mathcal{T}}^{\top}\left(D^{\top}M_{1}D+\int_{\mathcal{E}}F(e)^{\top}M_{1}F(e)\lambda(de)\right)K_\mathcal{T}, \label{eqA5} \\
\mathcal{Y}_2 =&~2M_{2}-2\left(B^{\top}M_{1}+D^{\top}M_{1}\left(C+DK_\mathcal{T}\right)+\int_{\mathcal{E}}F(e)^{\top}M_{1}\left(E(e)+F(e)K_\mathcal{T}\right)\lambda(de)\right) \nonumber\\
&+2\left(D^{\top}M_{1}D+\int_{\mathcal{E}}F(e)^{\top}M_{1}F(e)\lambda(de)\right)K_\mathcal{T}, \label{eqA6}\\
\mathcal{Y}_3 =&~M_{3}-D^{\top}M_{1}D-\int_{\mathcal{E}}F(e)^{\top}M_{1}F(e)\lambda(de). \label{eqA7} 
\end{align}
Then we have
\begin{equation} \label{eqA8}
\left[\mathrm{V}, \mathrm{II}_{xu}, \mathrm{III}_{\bar{u}}\right]\left(\begin{array}{cc}
vec^{+}(\mathcal{Y}_1) \\ 
vec(\mathcal{Y}_2) \\
vec^{+}(\mathcal{Y}_3)
\end{array}\right)=0.
\end{equation}

Noting that $\mathcal{Y}_1$ is symmetric, we can derive $\mathrm{V}\times vec^{+}(\mathcal{Y}_1)=\mathrm{II}_{xx}\times vec(\mathcal{Y}_1)$. Under condition \eqref{eq25a}, $\left[ \mathrm{V}, \mathrm{II}_{xu}, \mathrm{III}_{\bar{u}} \right]$ has full column rank. Hence, the unique solution to \eqref{eqA8} satisfies $vec^{+}(\mathcal{Y}_1)=\mathbf{0}$, $vec(\mathcal{Y}_2)=\mathbf{0}$ and $vec^{+}(\mathcal{Y}_3)=\mathbf{0}$. By the definition of $vec(\cdot)$ and $vec^{+}(\cdot)$, we further obtain $\mathcal{Y}_1=\mathbf{0}$, $\mathcal{Y}_2=\mathbf{0}$ and $\mathcal{Y}_3=\mathbf{0}$.
Substituting $\mathcal{Y}_2=\mathbf{0}$ and $\mathcal{Y}_3=\mathbf{0}$ into \eqref{eqA5}, the following Lyapunov equation holds.
\begin{align*}
&M_{1}(A+BK_\mathcal{T})+(A+BK_\mathcal{T})^{\top}M_{1}+(C+DK_\mathcal{T})^{\top}M_{1}(C+DK_\mathcal{T}) \nonumber \\
&+\int_{\mathcal{E}}(E(e)+F(e)K_\mathcal{T})^{\top}M_{1}(E(e)+F(e)K_\mathcal{T})\lambda(de)=0.
\end{align*}
Since $K_\mathcal{T}$ is a stabilizer, it follows from Definition \hyperref[definition2.1]{2.1} that the closed-loop system 
\begin{equation} \label{eqA10}
\left\{
\begin{aligned}
dX_\mathcal{L}(s)&=\left[A+BK_\mathcal{T}\right]X_\mathcal{L}(s)ds+\left[C+DK_\mathcal{T}\right]X_\mathcal{L}(s)dW(s)\\
&~~~+\int_{\mathcal{E}}\left[E(e)+F(e)K_\mathcal{T}\right]X_\mathcal{L}(s)\widetilde{N}(ds,de),~~s\geq t, \\
X_\mathcal{L}(t)&=x,
\end{aligned}
\right.
\end{equation}
satisfies $\lim_{s \to +\infty}\mathbb{E}[X_\mathcal{L}(s)^{\top}X_\mathcal{L}(s)]=0$. 

Applying $\mathrm{It\hat{o}}$'s formula to ${X_\mathcal{L}(s)}^{\top}M_{1}X_\mathcal{L}(s)$, for any $\tau>t$, taking the conditional expectation $\mathbb{E}^{\mathcal{F}_t}$ yields 
\begin{align}
&~\mathbb{E}^{\mathcal{F}_t}\left[{X_\mathcal{L}(\tau)^{\top}}M_{1}X_\mathcal{L}(\tau)-X_\mathcal{L}(t)^{\top}M_{1}X_\mathcal{L}(t)\right] \nonumber\\
=&~\mathbb{E}^{\mathcal{F}_t}\int_t^{\tau}{X_\mathcal{L}(s)}^{\top}\left[M_{1}(A+BK_\mathcal{T})+(A+BK_\mathcal{T})^{\top}M_{1}+(C+DK_\mathcal{T})^{\top}M_{1}(C+DK_\mathcal{T}) \right.  \nonumber \\
& \left. +\int_{\mathcal{E}}(E(e)+F(e)K_\mathcal{T})^{\top}M_{1}(E(e)+F(e)K_\mathcal{T})\lambda(de)\right]X_\mathcal{L}(s)ds, \label{eqA11} 
\end{align}
where $X_\mathcal{L}(\cdot)$ is governed by the system \eqref{eqA10}.

Letting $\tau \to \infty$, we obtain $x^{\top}M_{1}x=\lim_{\tau \to +\infty}\mathbb{E}[X(\tau)^{\top}M_{1}X(\tau)]=0$. Since $x$ can be any nonzero element in $\mathbb{R}^{n}$, it implies $M_{1}=\mathbf{0}$. Combining \eqref{eqA6} and \eqref{eqA7}, we further obtain $M_{2}=\mathbf{0}$ and $M_{3}=\mathbf{0}$. This contradicts the assumption that $M$ is nonzero. Therefore, $\Phi_{p}$ has full column rank, and \eqref{eq23-a} yields the unique solution for $P^{(i+1)}$, $\mathbb{B}^{(i+1)}$, $\mathbb{D}^{(i+1)}$. 

Second, by \cite{modares2014linear,xue2023data}, \eqref{eq25b} ensures that the coefficient matrix in \eqref{eq24-a} has full column rank. Thus $N^{(i+1)}$ is uniquely determined by \eqref{eq24-a}. This completes the proof.
\end{proof}

In summary, this section develops a model-free off-policy inverse policy iteration algorithm based on the learner's trajectories. The trajectories are generated under an initial stabilizing behavior policy, whereas the feedback policies updated during iteration are not used for data collection. This off-policy structure allows the collected trajectories to be used to construct implementable learning equations without requiring knowledge of the system parameters or jump intensity. Therefore, when the learner's data are available, this algorithm provides a complementary model-free approach for learning equivalent cost weights. The stability and convergence of the proposed algorithm are also established. 

\section{Simulation Results} \label{section5}
~~~~This section presents a numerical example to illustrate the effectiveness of data-driven Algorithms \hyperref[Algorithm 3]{1} and \hyperref[Algorithm 2]{2}. The parameter matrices of system \eqref{expert-system} are given by
\begin{equation*}
A=\begin{bmatrix}
2.0 & 2.0 \\ 
-0.9 & 0.2 
\end{bmatrix},~~
B=\begin{bmatrix}
2.1 \\ 
2.1 
\end{bmatrix}, ~~
C=\begin{bmatrix}
0.25 & 0.06 \\
0.06 & 0.25 
\end{bmatrix},~~
D=\begin{bmatrix}
0.05\\
0.04
\end{bmatrix},
\end{equation*}
\begin{equation*}
E(e)=\begin{bmatrix}
-0.35 & 0 \\
0 & -0.35 
\end{bmatrix},~~
F(e)=\begin{bmatrix}
0 \\
0
\end{bmatrix},~~\lambda = 0.9.
\end{equation*}
 
The target cost weights $N_\mathcal{T}$ and $R_\mathcal{T}$, together with the associated solution $P_\mathcal{T}$ and target control gain $K_\mathcal{T}$, are given as follows:
\begin{equation*}
N_\mathcal{T}=\left[
\begin{array}{cc}
1 & 0 \\
0 & 3
\end{array}\right],~~
R_\mathcal{T} = 5,
\end{equation*}
\begin{align*}
P_\mathcal{T}=\left[
\begin{array}{cc}
5.5969 & -0.3081 \\
-0.3081 & 2.1408
\end{array}\right],~~
K_\mathcal{T}= \left[
\begin{array}{cc}
-2.2283 & -0.7739 
\end{array}\right].
\end{align*}

The learner agent's initial cost function weight $N^{(0)}$ and prescribed $R$ are selected as
\begin{equation*}
N^{(0)}=\left[
\begin{array}{cc}
0.01 & 0 \\
0 & 0.01
\end{array}\right],~~
R=5.
\end{equation*}

A probing noise $e_\mathcal{T}$ is added to the expert input, where $e_\mathcal{T}$ is chosen as a bounded white noise signal. The limits $\mathbf{Q}^{*}$, $N^{*}$ and $K^{*}$ obtained by Algorithm \hyperref[Algorithm 3]{1} are
\begin{equation*}
\mathbf{Q}^{*}=\begin{bmatrix}
289.1077 & 100.4769 & 127.3382 \\
100.4769 & 35.9555 & 44.5142 \\
127.3382 & 44.5142 & 56.8349    
\end{bmatrix}, ~~
N^{*}= \begin{bmatrix}
5.9808 & -0.5341 \\
-0.5341 & 0.0654
\end{bmatrix}, ~~
K^{*}= \begin{bmatrix}
-2.2176 & -0.7844    
\end{bmatrix}.
\end{equation*}
We observe that although the limit $N^{*}$ is not equal to $N_\mathcal{T}$, the limit $K^{*}$ approximately equals $K_\mathcal{T}$ with an error $\|K^{*}-K_\mathcal{T}\|= 1.495\times 10^{-2}$, as further illustrated in Fig.\hyperref[fig1:sub1]{1(a)}. Moreover, Fig.\hyperref[fig1:sub2]{1(b)} and Fig.\hyperref[fig1:sub3]{1(c)} show that the learner's state and control trajectories closely follow the expert's behavior.
\begin{figure*}[t]
    \centering
    \subfloat[]{%
        \begin{minipage}[c]{0.32\textwidth}
            \centering
            \includegraphics[height=4.30cm]{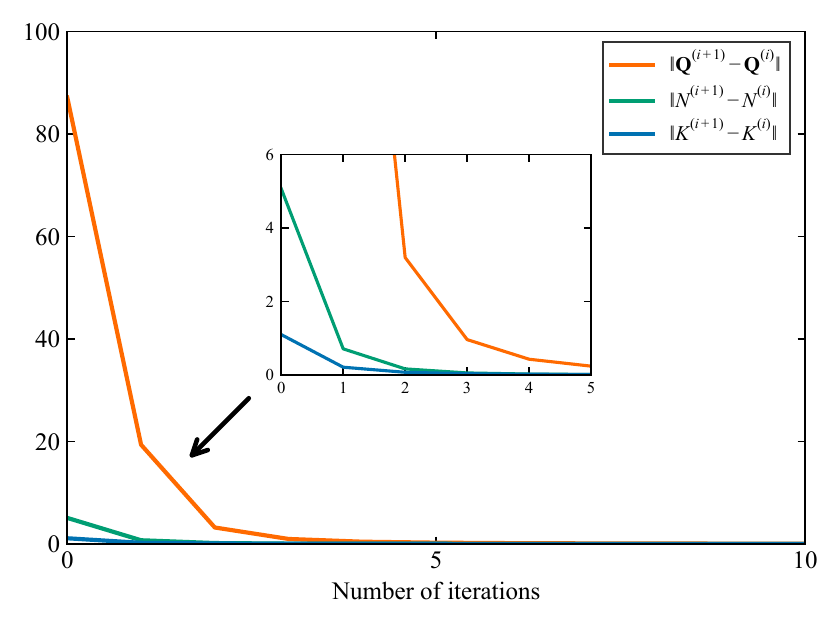}
            \label{fig1:sub1}
        \end{minipage}
    }
    \hfill
    \subfloat[]{%
        \begin{minipage}[c]{0.32\textwidth}
            \centering
            \includegraphics[height=4.4cm]{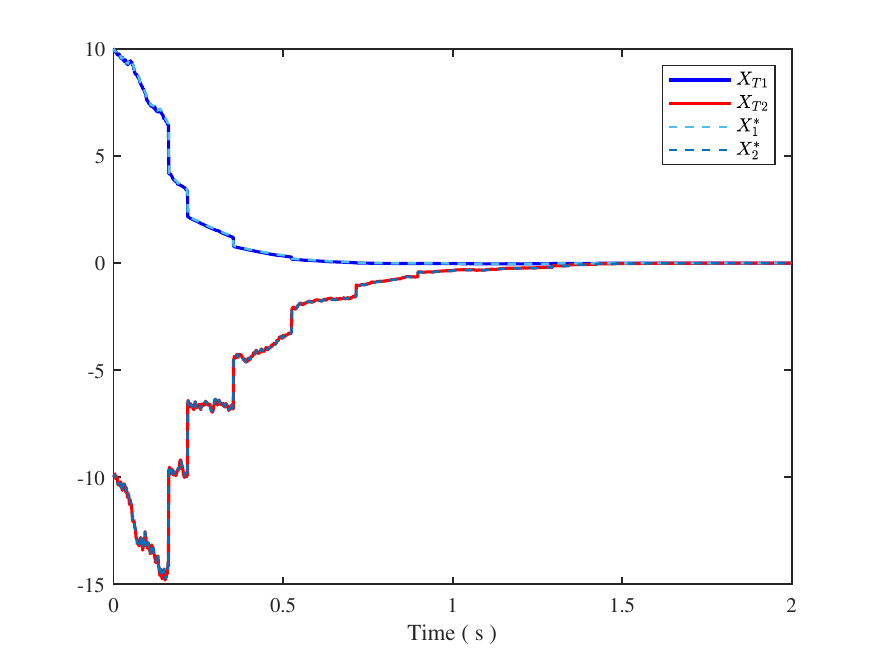}
            \label{fig1:sub2}
        \end{minipage}
    }
    \hfill
    \subfloat[]{%
        \begin{minipage}[c]{0.32\textwidth}
            \centering
            \includegraphics[height=4.4cm]{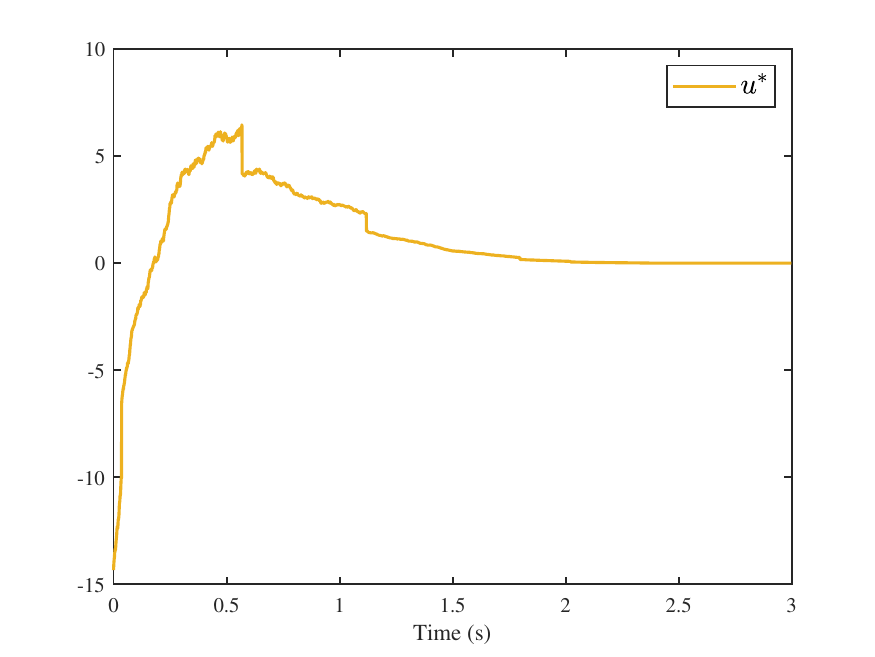}
            \label{fig1:sub3}
        \end{minipage}
    }
    \vspace{4pt}
    \caption{\protect\unboldmath All simulations using Algorithm \hyperref[Algorithm 3]{1}. (a) Convergence of $\mathbf{Q}^{(i+1)}$, $N^{(i+1)}$ and $K^{(i+1)}$. (b) State trajectories of $X_\mathcal{T}$ and $X^{*}$ using the limit $K^{*}$. (c) Control performance under the limit $K^{*}$.}
    \label{Fig 1}
\end{figure*} 

Next, we present the simulation results of Algorithm \hyperref[Algorithm 2]{2}. The initial state of system \eqref{expert-system} is $x = [10,-10]^{\top}$. To ensure that $\Phi_{p}$ and $I_{\bar{x}}$ satisfy the rank conditions in Lemma \hyperref[lemma3.1]{4.1}, the learner collects data under the initial behavior policy $u_\mathcal{L} = K_\mathcal{L}X_\mathcal{L} + e_\mathcal{L}$ on the time interval $[0,1]$, where the initial stabilizer is $K_\mathcal{L}=[-0.78 -0.86]$ and $e_\mathcal{L}$ is a probing noise. Fig.\hyperref[fig6:images]{2} illustrates that the learner's state trajectory $X_\mathcal{L} =(X_{\mathcal{L}1},X_{\mathcal{L}2})$ under the initial feedback gain $K_\mathcal{L}$ converges to zero. 
\begin{figure}[H]
\centering
\includegraphics[height=4.5cm]{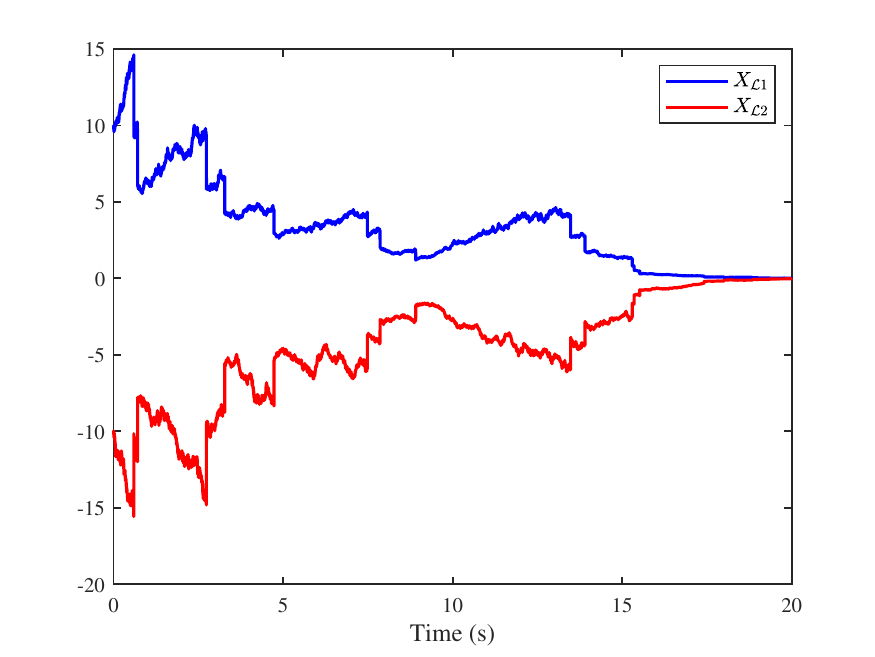}
\caption{State trajectory $X_\mathcal{L}$ running with the initial stabilizer $K_\mathcal{L}$.}
\label{fig6:images}
\end{figure}

The limits $\check{P}$, $\check{N}$, and $\check{K}$ are obtained as follows: 
\begin{equation*}
\check{P}=\begin{bmatrix}
4.7332 & 0.3964 \\
0.3964 & 1.3986
\end{bmatrix}, ~~
\check{N}= \begin{bmatrix}
4.3825 & -1.0633 \\
-1.0633 & 0.4382
\end{bmatrix}, ~~
\check{K}= \begin{bmatrix}
    -2.2279 & -0.7809
\end{bmatrix}.
\end{equation*}
The limit $\check{K}$ approximately equals $K_\mathcal{T}$ with an error $\|\check{K}-K_\mathcal{T}\|= 1.573\times 10^{-2}$, as further illustrated in Fig.\hyperref[fig3:sub1]{3(a)}. It can be observed that $\check{K}$ is very close to $K_\mathcal{T}$ while $\check{N}$ differs from $N_\mathcal{T}$. As shown in Fig.\hyperref[fig3:sub2]{3(b)} and \hyperref[fig3:sub3]{3(c)}, the learner uses Algorithm \hyperref[Algorithm 2]{2} to find an equivalent cost functional. The converged policy achieves performance comparable to that of the expert and stabilizes system \eqref{expert-system} faster than the initial behavior policy. Therefore, we conclude that both Algorithms \hyperref[Algorithm 3]{1} and \hyperref[Algorithm 2]{2} can find equivalent cost functionals whose optimal feedback gains coincide with the target gain, even though the learned cost weights may differ from the target weights.

\begin{figure*}[t]
    \vspace*{-0.50cm}
    \centering
    \subfloat[]{%
        \begin{minipage}[c]{0.32\textwidth}
            \centering
            \includegraphics[height=4.3cm]{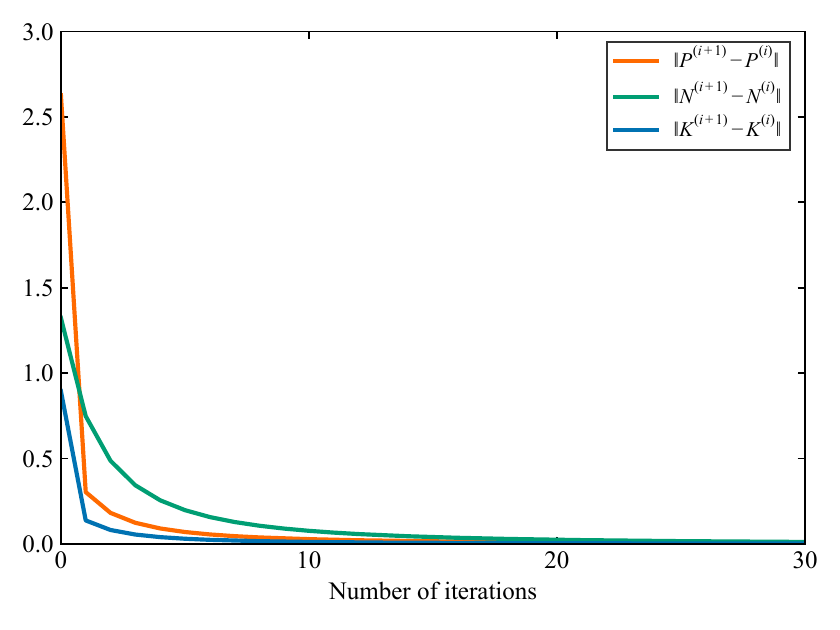}
            \label{fig3:sub1}
        \end{minipage}
    }
    \hfill
    \subfloat[]{%
        \begin{minipage}[c]{0.32\textwidth}
            \centering
            \includegraphics[height=4.4cm]{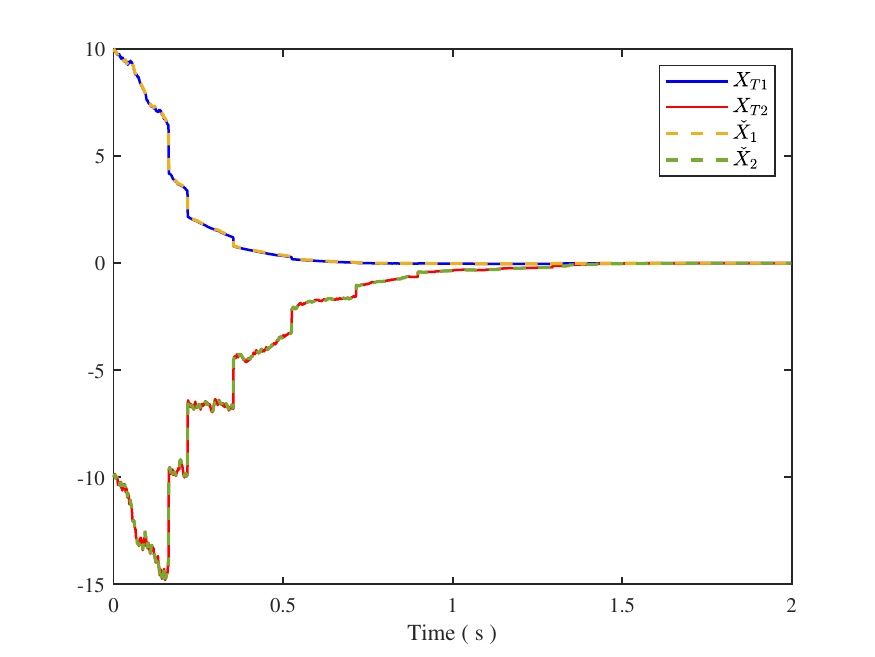}
            \label{fig3:sub2}
        \end{minipage}
    }
    \hfill
    \subfloat[]{%
        \begin{minipage}[c]{0.32\textwidth}
            \centering
            \includegraphics[height=4.4cm]{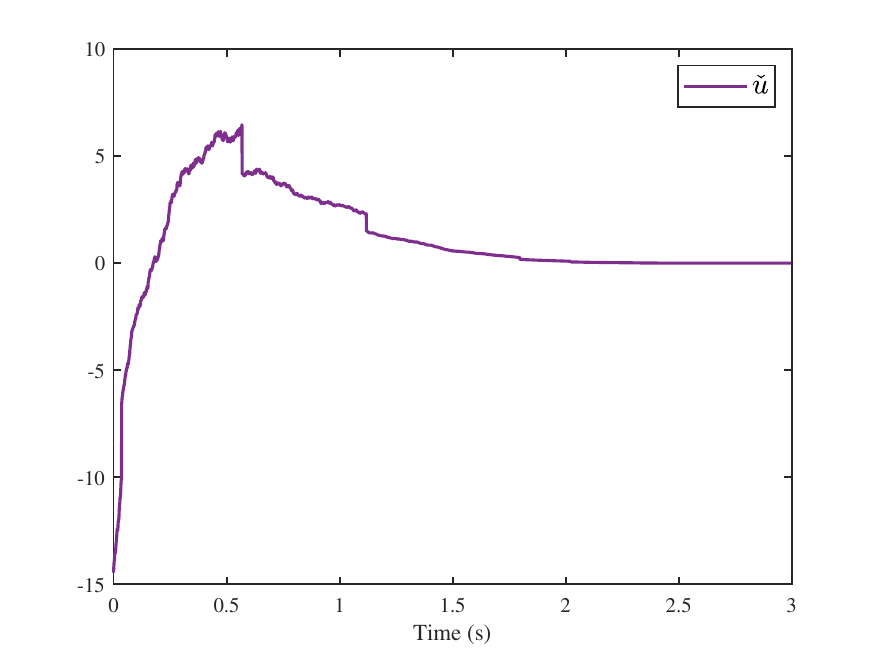}
            \label{fig3:sub3}
        \end{minipage}
    }
    \vspace{4pt}
    \caption{\protect\unboldmath All simulations using Algorithm \hyperref[Algorithm 2]{2}. (a) Convergence of $P^{(i+1)}$, $N^{(i+1)}$ and $K^{(i+1)}$. (b) State trajectories of $X_\mathcal{T}$ and $\check{X}$ using the limit $\check{K}$. (c) Control performance under the limit $\check{K}$.}
\end{figure*}

\section{Conclusion} \label{section6}
~~~~In this paper, we investigate an IOC problem for stochastic linear systems driven by both Brownian motion and Poisson jumps within an IRL framework. The objective is to find an equivalent cost functional without requiring knowledge of the system dynamics or jump intensity. Two model-free off-policy IRL algorithms are developed under different data scenarios. The inverse Q-learning algorithm uses expert demonstrations to construct data-driven Q-function equations. As a complementary framework, the model-free off-policy inverse policy iteration algorithm uses learner data collected under an initial stabilizing behavior policy to iteratively learn equivalent cost weights. In both algorithms, the behavior policy used for data collection is decoupled from the updated policies during iteration, and sufficient excitation is imposed to guarantee the required rank conditions. The stability and convergence of the proposed algorithms are rigorously established. Finally, numerical simulations shed light on the effectiveness of the proposed methods.

\bibliographystyle{elsarticle-harv}
\bibliography{IRL-arxiv}

\end{document}